\font \sevenrm=cmr7
\font \fiverm=cmr5
\newcommand{\nc}{\newcommand}
\nc\smsc{0.8}
\def\oprec{\!\!\joinrel{\ocircle\hskip -12.5pt \prec}\,}
\def\soprec{\,\joinrel{\ocircle\hskip -6.7pt \prec}\,}
\def\diagramme #1{\vskip 4mm \centerline {#1} \vskip 4mm}
\def \restr#1{\mathstrut_{\textstyle |}\raise-6pt\hbox{$\scriptstyle #1$}}
\def \srestr#1{\mathstrut_{\scriptstyle |}\hbox to
-1.5pt{}\raise-4pt\hbox{$\hskip 1pt\scriptscriptstyle #1$}}
\nc{\mop}[1]{\mathop{\hbox {\rm #1} }\nolimits}
\nc{\gmop}[1]{\mathop{\hbox {\bf #1} }\nolimits}
\nc{\smop}[1]{\mathop{\hbox {\sevenrm #1} }\nolimits}
\nc{\ssmop}[1]{\mathop{\hbox {\fiverm #1} }\nolimits}
\nc{\mopl}[1]{\mathop{\hbox {\rm #1} }\limits}
\def\dbar{d\hskip-3pt \raise 4pt\hbox{-}}
\nc{\smopl}[1]{\mathop{\hbox {\sevenrm #1} }\limits}
\nc{\ssmopl}[1]{\mathop{\hbox {\fiverm #1} }\limits}
\newcommand{\delete}[1]{}
\nc{\mlabel}[1]{\label{#1}}  
\nc{\mcite}[1]{\cite{#1}}  
\nc{\mref}[1]{\ref{#1}}  
\nc{\mbibitem}[1]{\bibitem{#1}} 
\nc{\mlabel}[1]{\label{#1}  
{\hfill \hspace{1cm}{\small\tt{{\ }\hfill(#1)}}}}
\nc{\mcite}[1]{\cite{#1}{\small{\tt{{\ }(#1)}}}}  
\nc{\mref}[1]{\ref{#1}{{\tt{{\ }(#1)}}}}  
\nc{\mbibitem}[1]{\bibitem[\bf #1]{#1}} 
\newtheorem{theorem}{Theorem}[section]
\newtheorem{definition}{Definition}[section]
\newtheorem{proposition}{Proposition}[section]
\newtheorem{lemma}{Lemma}[section]
\newtheorem{remark}{Remark}[section]
\numberwithin{equation}{section}
\newcommand\alphlist{a,b,c,d,e,f,g,h,i,j,k,l,m,n,o,p,q,r,s,t,u,v,w,x,y,z}
\newcommand\Alphlist{A,B,C,D,E,F,G,H,I,J,K,L,M,N,O,P,Q,R,S,T,U,V,W,X,Y,Z}
\newcommand\getcmds[3]{\expandafter\newcommand\csname #2#1\endcsname{#3{#1}}}
\alphlist\do{\expandafter\getcmds\expandafter{\x}{frak}{\mathfrak}}
\Alphlist\do{\expandafter\getcmds\expandafter{\x}{frak}{\mathfrak}}
\nc{\bfk}{{\bf k}}
\nc{\sha}{\shuffle}
\nc{\id}{\mathrm{id}}
\nc{\Id}{\mathrm{Id}}
\nc{\lbar}[1]{\overline{#1}}
\nc{\ot}{\otimes}
\nc{\dep}{\mathrm{dep}}
\nc{\ver}{\mathrm{ver}}
\nc{\tred}[1]{\textcolor{red}{#1}} \nc{\tgreen}[1]{\textcolor{green}{#1}}
\nc{\tblue}[1]{\textcolor{blue}{#1}} \nc{\tpurple}[1]{\textcolor{purple}{#1}}
\nc{\tcyan}[1]{\textcolor{cyan}{#1}} 
\nc{\tblk}[1]{\textcolor{black}{#1}}
\nc{\li}[1]{\tpurple{\underline{Li:}#1 }}
\nc{\liadd}[1]{\tpurple{#1}}
\nc{\xing}[1]{\tblue{\underline{Xing:}#1 }}
\nc{\yuan}[1]{\tred{\underline{Yuan:}#1 }}
\nc{\markus}[1]{\tred{\underline{Markus:} #1}}
\nc{\dominique}[1]{\tpurple{\underline{Dominique: }#1 }}
\long\def\ignore#1{}
\tikzset{
baseon/.style={baseline={($(#1)+(0,-0.58ex)$)}},
baseon/.default=current bounding box.center,
every picture/.style=baseon,
lst/.style={},
dst/.style={circle,inner sep=1pt,outer sep=0pt,fill,draw,dst2},
dst2/.style={fill=white},
ddst/.style={diamond,draw,inner sep=1pt},
eest/.style={ellipse,draw,inner sep=1pt,minimum size=2ex},
}
\def\zzz#1`#2...#3`#4...#5`#6@{%
--++(#1)
node[dst,label={#5:$#6$},name=#2]{}
node[midway,auto,#3]{$#4$}
}
\def\ddd#1`#2`#3@{+(#1)node[ddst,name=#2]{$#3$}}
\def\eee#1`#2`#3@{+(#1)node[eest,name=#2]{$#3$}}
\def\xxx#1`#2@{node[midway,auto,inner sep=1pt,#1]{$#2$}}
\def\pp#1`#2`#3@{node[dst,label={#2:$#3$},pos=#1]{}}
\def\oo#1`#2`#3@{\path (o) node[dst,label={#2:$#3$},name=o,#1]{};}
\def\eoo#1`#2@{\node[eest,name=o,#1] at (o) {$#2$};}
\newif\ifshowjdq
\newcommand\setXXclip[3]{%
\def\XXheight{#1}\def\XXdepth{#2}\def\XXwidth{#3}}
\newcommand\simra{\mathrel{\mathpalette\@verra\sim}}
\def\@verra#1#2{\lower.5\p@\vbox{\lineskiplimit\maxdimen \lineskip-.5\p@
\ialign{$\m@th#1\hfil##\hfil$\crcr#2\crcr\rightarrow\crcr}}}
\nc{\dnx}{\Delta_n A} \nc{\dx}{\Delta A} \nc{\dgp}{{\rm deg_{P}}}
\nc{\dgt}{{\rm deg_{T}}} \nc{\dg}{{\rm deg}} \nc{\ida}{ID($A$)} \nc{\tu}{\tilde{u}} \nc{\tv}{\tilde{v}}
\nc{\nr}{\calr_n} \nc{\nz}{\calz_n} \nc{\fun}{\cala_{n,d}}
 \nc{\fbase}{\calb} \nc{\LF}{\mathrm{RF}} \nc{\FFA}{\mathrm{LF}} \nc{\irr}{\mathrm{Irr}}
 \nc{\result}{\bfk\mathrm{Irr}(S_n)}  \nc{\I}{I_{\mathrm{ID},n}^0}
 \nc{\nrs}{\calr_n^\star} \nc{\ii}{\mathrm{I}} \nc{\iii}{\mathrm{II}}
\nc{\intl}{{\rm int}}\nc{\ws}[1]{{#1}}\nc{\deleted}[1]{\delete{#1}}\nc{\plas}{placements\xspace}
\nc{\bim}[1]{#1}  \nc{\shaop}{\sha_{\Omega}^{+}}  \nc{\shao}{\sha_{\Omega}}
\nc{\bbim}[2]{#1 #2} \nc{\bbbim}[2]{#1,\, #2} \nc{\RBF}{{\rm RBF}}
\nc{\frb}{F_{\RB}} \nc{\shaf}{\ssha_{\tiny{\Omega}}} \nc{\sham}{\diamond_{\tiny{\Omega}}}
\nc{\lf}{\lfloor} \nc{\rf}{\rfloor} \nc{\shan}{\ssha_{\lambda}}
\nc{\rlex}{{\rm {lex}}} \nc{\bb}{\Box} \nc{\ra}{\rightarrow}
\nc{\e}{{\rm {e}}}
\nc{\DDF}{\mathrm{DD}(X,\,\Omega)}\nc{\DTF}{\mathrm{DT}(X,\,\Omega)} \nc{\DT}{\mathrm{DT}'(\Omega,\,V)}
\nc{\bra}{\mathrm{bra}} \nc{\bre}{\mathrm{bre}}
\nc{\dec}{\mathrm{dec}} \nc{\diamondw}{\diamond_{w}}
\nc{\type}{\mathrm{type}}
\nc\caF[1]{\cal{F}_{#1}(X,\,\Omega)}
\nc\calt{\cal{T}(X,\,\Omega)} \nc\caltn{\cal{T}_n(X,\,\Omega)}
\nc\calta{\cal{T}_0(X,\,\Omega)}
\nc\caltb{\cal{T}_1(X,\,\Omega)}
\nc\caltc{\cal{T}_2(X,\,\Omega)}
\nc\caltd{\cal{T}_3(X,\,\Omega)}
\nc\caltm{\cal{T}_m(X,\,\Omega)}
\nc\caltx{\cal{T}(X)}
\nc\calf{\cal{F}(X,\,\Omega)}
\nc\fram{\frak{M}(\Omega,\, X)}
\nc\shaw{\sha^{NC}_w(\Omega,\, X)}
\nc\dw{\diamond_w} \nc\dl{\diamond_\ell}
\nc\shal{\sha^{NC}_\ell(X,\, \Omega)} \nc\shav{\sha^{NC}_w(\Omega,\, V)} \nc\shat{\sha^{NC,1}_w(\Omega,\, T^{+}(V))}
\nc{\cfo}{\cal{F}(X,\,\Omega)}
\nc{\sh}{\rm{Sh}}
\nc{\lar}{\varinjlim}
\nc\XO{(X,\,\Omega)}
\def\cxo#1#2;{\cal{#1}#2\XO}
\nc\lrf[2]{B_{#2}^+(#1)}
\nc{\fd}{\mathrm{\text{typed angularly decorated planar rooted trees}}}
\nc{\rb}{\mathrm{RBFWs}} \nc{\dfw}{\mathrm{DFW{(X)}}} \nc{\tfw}{\mathrm{TFW{(X)}}}
\nc{\tfv}{\mathrm{TFW{(V)}}}
\def\Ve#1,#2,#3;{\vee_{#1,\,(#2,\,#3)}}
\def\bigv#1;#2;#3;{\bigvee\nolimits_{#1}^{#2;\,#3}}
\nc\rjt[2]{\mathrel{\mathop{\longrightarrow}\limits^{#1\hfill}_{\hfill#2}}}
\nc{\pl}{\cal{PLF}}
\nc{\tr}{\cal{RTF}}
\nc{\im}{\mathrm{Im}}
\nc{\ff}{\cal{F}_\Omega}
\nc{\tm}{T_\Omega}
\nc{\calp}{\cal{P}}
\nc\dd{\@ifnextchar'{\ddA}{\ddB}}
\def\ddA'#1;{\rhd'_{#1\,}}
\def\ddB#1;{\rhd_{#1\,}}
\nc{\pbt}{\mathrm{PBT}}
\nc{\ad}{\mathrm{ad}}
\begin{document}

\title[Doubling bialgebras]{Doubling bialgebras of finite topologies}
\thispagestyle{empty}
\author{Mohamed Ayadi}
\address{Laboratoire de Math\'ematiques Blaise Pascal,
CNRS--Universit\'e Clermont-Auvergne,
3 place Vasar\'ely, CS 60026,
F63178 Aubi\`ere, France, and University of Sfax, Faculty of Sciences of Sfax,
LAMHA, route de Soukra,
3038 Sfax, Tunisia.}
\email{mohamed.ayadi@etu.uca.fr}
\author{Dominique Manchon}
\address{Laboratoire de Math\'ematiques Blaise Pascal,
CNRS--Universit\'e Clermont-Auvergne,
3 place Vasar\'ely, CS 60026,
F63178 Aubi\`ere, France}
\email{Dominique.Manchon@uca.fr}

		\tikzset{
			stdNode/.style={rounded corners, draw, align=right},
			greenRed/.style={stdNode, top color=green, bottom color=red},
			blueRed/.style={stdNode, top color=blue, bottom color=red}
		}
	
	\begin{abstract}
		The species of finite topological spaces admits two graded bimonoid structures, recently defined by F. Fauvet, L. Foissy, and the second author. In this article, we define a doubling of this species in two different ways. We build a bimonoid structure on each of these species and describe a cointeraction between them. We also investigate two related associative products obtained by dualisation.
	\end{abstract}
	
\keywords{Finite topological spaces, Species, Bimonoids, Bialgebras, Hopf algebras, Comodules}
\subjclass[2010]{16T05, 16T10, 16T15. 16T30, 06A11}
\maketitle
\tableofcontents
	\section{Introduction and preliminaries }
	The present article considers all finite topological spaces at once, and aims at investigating how they organise themselves into a rich algebraic structure, along the lines opened by L.~Foissy, C.~Malvenuto and F.~Patras in \cite{acg11,acg12}. It is a follow-up of \cite{acg10} by F.~Fauvet, L.~Foissy and the second author, in the context of a doubling procedure first developed by M.~Belhaj Mohamed for specified Feynman graphs \cite{acg3a, acg3b}.\\
	
	The general idea of the doubling procedure can be outlined as follows: from a connected graded Hopf algebra $\mathcal H$ linearly generated by a basis $\mathcal B$ of combinatorial objects, on can generate a bialgebra $\mathcal D$ from ordered pairs $(\Gamma,\gamma)$ where $\Gamma\in\mathcal B$ and $\gamma$ is a sub-object of $\Gamma$, in a sense to be precised. The original motivation came from renormalisation in Quantum Field Theory, more precisely from the formulation of the Bogoliubov-Parasiuk-Hepp-Zimmermann algorithm one can read in Physics textbooks prior to the Connes-Kreimer Hopf-algebraic formalism, such as \cite[Chap. 8]{S1991}. The Feynman rules consist, for each one-particle-irreducible Feynman graph $\Gamma$ of the theory at stake, in integrating the corresponding function $G_\Gamma$ with respect to internal momenta, thus producing a function $F_\Gamma=I_\Gamma(G_\Gamma)$ depending on the external momenta only. These integrals are notoriously divergent in general, and therefore ask for regularisation and renormalisation.  The renormalisation procedure (see e.g. \cite[\S\ 8.1]{S1991}) makes use of the operators $I_{\Gamma,\gamma}$ consisting in integrating (in a suitable regularised sense) only with respect to the internal momenta of a locally 1PI subgraph $\gamma$ of $\Gamma$. To be precise, $I_{\Gamma,\gamma}$ is obtained from the subtraction operator $M_\gamma^{a_\gamma}$ defined by Equation (8.3a) in \cite{S1991} by replacing the Taylor truncation $\mathcal T_{q^\gamma}^{a_\gamma}$ by the identity operator.\\
	
	For any locally 1PI subgraph $\delta$ of $\gamma$ it is possible to proceed in two steps, which gives
	$$I_{\Gamma,\gamma}=I_{\Gamma/\delta,\,\gamma/\delta}\circ I_{\Gamma,\delta},$$
with the usual notation to denote contracted graphs. The operator $I_{\Gamma,\gamma}$ is the identity if the subgraph $\gamma$ is loopless, and $I_{\Gamma,\Gamma}$ is the full integration $I_\Gamma$. The identity above in the case $\gamma=\Gamma$ can be rewritten as
	$$I_\Gamma=I_{\Gamma/\delta}\circ I_{\Gamma,\delta}.$$

The pairs $(\Gamma,\gamma)$ as above generate the doubling bialgebra of the Connes-Kreimer Hopf algebra, the degree being the loop number of the subgraph $\gamma$. The second projection $(\Gamma,\gamma)\mapsto\gamma$ is a graded bialgebra morphism \cite{acg3b} onto the Connes-Kreimer Hopf algebra of specified graphs \cite{acg3a}.\\
	
	The doubling procedure has been adapted by M.~Belhaj Mohamed and the second author to the Hopf algebras of rooted forests \cite{acg4}, and shows in the present work to be relevant also in the finite topology framework.\\
	 
	Recall (see e.g. \cite{acg15,acg16,acg10}) that a topology on a finite set $X$ is given by the
family $\mathcal{T}$ of open subsets of $X$, subject to the three following axioms:
	\begin{itemize}
		\item $\hbox{\o}\in\mathcal{T}$, $X\in\mathcal{T}$,
		\item The union of (a finite number of) open subsets is an open subset,
		\item The intersection of a finite number of open subsets is an open subset.
	\end{itemize}
	Any topology $\mathcal{T}$ on $X$ defines a quasi-order (i.e. a reflexive transitive relation) denoted by $\leq_{\mathcal{T}}$ on $X$:
	\begin{equation}
	x\leq_{\mathcal{T}}y\Longleftrightarrow \hbox{ any open subset containing $x$ also contains $y$}.
	\end{equation}
	Conversely, any quasi-order $\leq$ on $X$ defines a topology $\mathcal{T}_{\leq}$ given by its upper ideals, i.e. subsets $Y\subset X$ such that ($y\in Y$ and $y\leq z$) $\implies z\in Y$. Both operations are inverse to each other:\\
	\begin{equation}
	\leq_{\mathcal{T}_{\leq}}= \leq,\hspace*{2cm} \mathcal{T}_{\leq_{\mathcal{T}}}=\mathcal{T}.
	\end{equation}
	Hence there is a natural bijection between topologies and quasi-orders on
	a finite set $X$.
	Any quasi-order (hence any topology $\mathcal{T}$ ) on $X$ gives rise to an equivalence relation:
	\begin{equation}
	x \sim_{\mathcal{T}}y\Longleftrightarrow \left( x\leq_{\mathcal{T}}y \hbox{ and } y\leq_{\mathcal{T}}x \right) .
	\end{equation}
More on finite topological spaces can be found in \cite{acg3,acg9,acg14, acg16}.\\

	Let us recall the construction from \cite{acg9} of two bimonoids \cite{acg1,acg2} in cointeraction on the linear species of finite topological spaces, which orginated from a previous Hopf-algebraic approach \cite{acg11,acg12}. Let $\mathcal{T}$ and $\mathcal{T}'$ be two topologies on a finite set $X$. We say that $\mathcal{T}'$ is finer than $\mathcal{T}$, and we write $\mathcal{T}'\prec \mathcal{T}$, when any open subset for $\mathcal{T}$ is an open subset for $\mathcal{T}'$. This is equivalent to the fact that for any $x,y\in X$, $x\le_{\mathcal{T}'}y\Rightarrow x\le_{\mathcal{T}}y$.\\
	
	The \textsl{quotient} $\mathcal{T}/\mathcal{T}'$ of two topologies $\mathcal{T}$ and $\mathcal{T}'$ with $\mathcal{T}'\prec \mathcal{T}$ is defined as follows (\cite[Paragraph 2.2]{acg10}): The associated quasi-order $\le_{\mathcal{T}/\mathcal{T}'}$ is the transitive closure of the relation $\mathcal{R}$ defined by:
	\begin{equation}
	x\mathcal{R} y\Longleftrightarrow (x\leq_{\mathcal{T}} y\hbox{ or }y\leq_{\mathcal{T}'} x).
	\end{equation}
	\\
	Recall that a linear species is a contravariant functor from the category
	of finite sets with bijections into the category of vector spaces (on some
	field $\mathbf{k}$). The tensor product of two species  $\mathbb{E}$ and $\mathbb{F}$ is given by
	\begin{equation}
	(\mathbb{E}\otimes \mathbb{F})_X=\bigoplus_{Y\sqcup Z= X}\mathbb{E}_{Y}\otimes\mathbb{F}_{Z},
	\end{equation}
	where the notation $\sqcup $ stands for disjoint union. The species $\mathbb{T}$ of finite topological spaces is defined as follows: For any finite set $X$, $\mathbb{T}_X$  is
	the vector space freely generated by the topologies on $X$. For any bijection
	$\varphi : X \longrightarrow  X^{\prime } $, the isomorphism $\mathbb{T}_{\varphi } : \mathbb{T}_{ X^{\prime}} \longrightarrow \mathbb{T}_X$ is defined by the obvious
	relabelling:
	$$\mathbb{T}_{\varphi }(\mathcal{T})=\{ \varphi^{-1}(Y), Y\in  \mathcal{T}  \}$$
	for any topology $\mathcal{T}$ on $X^{\prime }$.\\
	
	\noindent For any finite set $X$, let us recall from \cite{acg10} the coproduct $\Gamma$ on $\mathbb{T}_X$:
	\begin{equation}
	 \Gamma(\mathcal{T})=\sum \limits_{\underset{}{\mathcal{T}^{\prime}\soprec \mathcal{T}}}\mathcal{T}^{\prime}\otimes \mathcal{T}/ \mathcal{T}^{\prime}.
	 \end{equation}
	The sum runs over topologies $\mathcal{T}^{\prime}$ which are $\mathcal{T}$-admissible, i.e 
\begin{itemize}
	\item finer than $\mathcal{T}$,
	\item such that $\mathcal{T}^{\prime}_{|Y}=\mathcal{T}_{|Y}$ for any subset $Y\subset X$ connected for the topology $\mathcal{T}^{\prime}$,\
	\item such that for any $x, y \in X$,
	\begin{equation}
	 x \sim_{\mathcal{T}/ \mathcal{T}^{\prime}} y \iff x \sim_{\mathcal{T}^{\prime}/ \mathcal{T}^{\prime}} y.
	 \end{equation}
\end{itemize}
	A commutative monoid structure (\cite[Paragraph 2.3]{acg10}) on
	the species of finite topologies is defined as follows: for any pair $X_1, X_2$ of
	finite sets we introduce
	\begin{align*}
	m:\mathbb{T}_{X_1}\otimes \mathbb{T}_{X_2} \longrightarrow \mathbb{T}_{X_1\sqcup X_2} \\
	\mathcal{T}_1\otimes \mathcal{T}_2\longmapsto \mathcal{T}_1\mathcal{T}_2,
	\end{align*}
	where $\mathcal{T}_1\mathcal{T}_2$ is the disjoint union topology characterised by $Y\in \mathcal{T}_1\mathcal{T}_2$ if and only if $Y\cap X_1\in \mathcal{T}_1$ and
	$Y\cap X_2\in \mathcal{T}_2$. The unit is given by the unique topology on the empty set.\\
	
	For any topology $\mathcal{T}$ on a finite set $X$ and for any subset $Y \subset X$, we
	denote by $\mathcal{T}_{|Y}$ the restriction of $\mathcal{T}$ to $Y$. It is defined by:
	$$\mathcal{T}_{|Y}= \left\lbrace Z\cap Y, Z\in \mathcal{T} \right\rbrace. $$
	The external coproduct $\Delta$ on $\mathbb{T}$ is defined as follows:
	\begin{eqnarray*}
		\Delta:\mathbb{T}_X&\longrightarrow& (\mathbb{T}\otimes \mathbb{T})_X=\bigoplus_{Y\sqcup Z= X}\mathbb{T}_{Y}\otimes\mathbb{T}_{Z}\\
		\mathcal{T}&\longmapsto& \sum_{Y\in  \mathcal{T}}\mathcal{T}_{|X\backslash Y}\otimes \mathcal{T}_{|Y}.
	\end{eqnarray*}
	The internal and external coproducts are compatible, i.e. the following diagram commutes for any finite $X$.
	
	\diagramme{
		\xymatrix{
			\mathbb{T}_X\ar[rr]^{\Gamma}\ar[d]_{\Delta} &&
			\mathbb{T}_X\otimes\mathbb{T}_X\ar[d]^{I\otimes\Delta}\\
			(\mathbb{T}\otimes\mathbb{T})_X\ar[dr]_{\Gamma\otimes\Gamma}&&
			\mathbb{T}_X\otimes(\mathbb{T}\otimes\mathbb{T})_X\\
			&\bigoplus\limits_{Y\subset X}\mathbb{T}_{X\backslash Y}\otimes\mathbb{T}_{X\backslash Y}\otimes\mathbb{T}_Y\otimes \mathbb{T}_Y\ar[ur]_{m^{1,3}}&
		}
	}
	
	\noindent Now consider the graded vector space:\\
	\begin{equation}
	\mathcal{H}=\overline{\mathcal{K}}(\mathbb{T})=\bigoplus \limits_{\underset{}{n\geq 0}} \mathcal{H}_n
	\end{equation}
	where $\mathcal{H}_0 =\mathbf{k}.1$, and where $\mathcal{H}_n$ is the linear span of topologies on $\left\{ 1, . . . , n \right\}$
	when $n \geq 1$, modulo the action of the symmetric group $S_n$. The vector space $\mathcal{H}$ can be seen as the quotient of the species $\mathbb{T}$ by the "forget the labels" equivalence relation: $\mathcal{T} \sim \mathcal{T}^{\prime}$  if $\mathcal{T}$ $\left(\text{resp.} \mathcal{T}^{\prime} \right)$ is a topology on a finite
	set $X$ (resp. $X'$), such that there is a bijection from $X$ onto $X^{\prime}$ which is a homeomorphism with respect to both topologies. The functor $\overline{\mathcal{K}}$ from linear species to graded vector spaces thus obtained is intensively studied in (\cite[chapter 15]{acg1}) under the name "bosonic Fock functor". This  naturally leads to the following:\\
	\begin{itemize}
	    \item $(\mathcal{H},m,\Delta )$ is a commutative connected Hopf algebra, graded by the number of elements.
	    \item $(\mathcal{H}, m, \Gamma  )$ is a commutative bialgebra, graded by the number of equivalence classes minus the number of connected components.
	    \item $(\mathcal{H},m,\Delta )$ is a comodule-bialgebra on $(\mathcal{H},m,\Gamma )$. In particular the following diagram of unital algebra morphisms commutes:
	\end{itemize}
	\diagramme{
		\xymatrix{
			\mathcal{H}\ar[rr]^{\Gamma}\ar[d]_{\Delta} &&
			\mathcal{H}\otimes\mathcal{H}\ar[d]^{I\otimes\Delta}\\
			\mathcal{H}\otimes\mathcal{H}\ar[dr]_{\Gamma\otimes\Gamma}&&
			\mathcal{H}\otimes\mathcal{H}\otimes\mathcal{H}\\
			&\mathcal{H}\otimes\mathcal{H}\otimes\mathcal{H}\otimes \mathcal{H}\ar[ur]_{m^{1,3}}&
		}
	}
	
	On the vector space freely generated by rooted forests, Connes and Kreimer define in \cite{acg4, acg5, acg7} a graded bialgebra structure defined using allowable cuts. In \cite{acg8}, D.~Calaque, K.~Ebrahimi-Fard and the second author introduced bases of a graded Hopf algebra structure defined using contractions of trees.
M. Belhaj Mohamed and the second author introduced in \cite{acg6} the doubling of these two spaces and they built two bialgebra structures on these spaces, which are in interaction. They have also shown that two bialgebra satisfied a commutative diagram similar to the diagram of \cite{acg8} in the case of rooted trees Hopf algebra, and in the case of directed graphs without cycles \cite{acg13}.
\\
\\
	
	In Section \ref{sect:doubling} of this paper, we define two different doubling species $\mathbb{D}$ and $\widetilde{\mathbb{D}}$ of the species $\mathbb{T}$. For later use will also consider  $\mathcal{D}=\overline{\mathcal{K}}(\mathbb{D})$ and $\widetilde{\mathcal{D}}=\overline{\mathcal{K}}(\widetilde{\mathbb{D}})$. The species $\mathbb{D}$ is defined as follows: For any finite set $X$, $\mathbb{D}_X$ is the vector space spanned by the pairs $(\mathcal{T},Y)$  where $\mathcal{T}$ is a
	topology on $X$ and $Y \in \mathcal{T}$. Similarly, $\widetilde{\mathbb{D}}_X$ is the vector space spanned by the ordered pairs  $(\mathcal{T}, \mathcal{T}^{\prime} )$
	where $\mathcal{T}$ is a topology on $X$, and $\mathcal{T}^{\prime}\oprec \mathcal{T}$.
	We prove that there exist graded bimonoid  structures on $\mathbb{D}_X$ and $\widetilde{\mathbb{D}}_X$, where the external and internal coproducts are defined respectively by
	\begin{equation}
	\Delta(\mathcal{T}, Y)=\sum \limits_{\underset{}{Z\in \mathcal{T}_{|Y}}}(\mathcal{T}_{|Z}, Z)\otimes (\mathcal{T}_{|X\backslash Z}, Y\backslash Z),
	\end{equation}
	for all $(\mathcal{T}, Y) \in \mathbb{D}_X$, and
	\begin{equation}
	\Gamma (\mathcal{T},\mathcal{T}^{\prime})=\sum \limits_{\underset{}{\mathcal{T}^{\prime\prime}\soprec  \mathcal{T}^{\prime}}}(\mathcal{T},\mathcal{T}^{\prime\prime})\otimes (\mathcal{T}/ \mathcal{T}^{\prime\prime},\mathcal{T}^{\prime}/ \mathcal{T}^{\prime\prime}).
	\end{equation}
	 for all $(\mathcal{T}, \mathcal{T}^{\prime}) \in \mathbb{\widetilde{D}}_X$. We show the inclusions $\Delta(\mathbb{D}_X ) \subset (\mathbb{D}\otimes \mathbb{D})_X=\bigoplus \limits_{Y\sqcup Z= X}\mathbb{D}_{Y}\otimes\mathbb{D}_{Z}$ and $\Gamma(\widetilde{\mathbb{D}}_X ) \subset \widetilde{\mathbb{D}}_X \otimes \widetilde{\mathbb{D}}_X$, and that $\Delta$ and $\Gamma$ are coassociative. It turns out that only the internal coproduct $\Gamma$ is counital.\\

	In Section \ref{sect:comodule}, after a reminder of the main results of \cite{acg10}, we show an important restriction result, namely the notion of $\mathcal T$-admissibility is stable under restriction to any subset (Proposition \ref{restriction}), and we prove that $\mathbb{D}_X$ admits a comodule structure on $\widetilde{\mathbb{D}}_X$ given by the coaction $\Phi : \mathbb{D}_X \longrightarrow \widetilde{\mathbb{D}}_X \otimes \mathbb{D}_X$, which is defined for all $(\mathcal{T}, Y) \in \mathbb{D}_X$ by: 
	$$\Phi (\mathcal{T},Y)=\sum \limits_{\underset{\scriptstyle\mathcal{T}^{\prime}_{|X\backslash Y}=D_{X\backslash Y,\mathcal{T}'}}{\mathcal{T}^{\prime}\soprec  \mathcal{T},\,Y\in \mathcal{T}/ \mathcal{T}' }}(\mathcal{T},\mathcal{T}^{\prime})\otimes (\mathcal{T}/ \mathcal{T}^{\prime},Y)$$
	where, for any topology $\mathcal{T}$ on a finite set $X$, the finest $\mathcal{T}$-admissible topology is denoted by $D_{X,\mathcal{T}}$. The connected components of $D_{X,\mathcal{T}}$ are the equivalence classes of $\mathcal{T}$, and $D_{X,\mathcal{T}}$ restricted to each connected component is the coarse topology. For any $Y\subset X$, we note $D_{Y,\mathcal{T}}$ for $D_{Y,\mathcal{T}_{|Y}}$.
	\begin{remark}\rm
		We obviously have $d(D_{X,\mathcal{T}})=0$ where $d$ is the grading given by the number of equivalence classes minus the number of connected components \cite{acg10}. We also clearly have
		$$\mathcal T/D_{X,\mathcal{T}}=\mathcal T.$$
	\end{remark}
	In Section \ref{sect:assoc}, we construct an associative product on $\mathbb{D}$ given by $\ast : \mathbb{D} \otimes \mathbb{D} \longrightarrow \mathbb{D}$, defined for all $(\mathcal{T}_1,Y_1) \in \mathbb{D}_{X_1}$ and $(\mathcal{T}_2,Y_2) \in \mathbb{D}_{X_2}$, (where $X_1$ and $X_2$ are two finite sets) by:
	\[(\mathcal{T}_1,Y_1)\ast (\mathcal{T}_2,Y_2) \longmapsto \begin{cases}(\mathcal{T}_1,Y_1 \sqcup Y_2) & \text{if $X_2=X_1\setminus Y_1$ and $\mathcal{T}_2={\mathcal{T}_1}\restr {X_2}$}\\
	0 & \text{if not}. \end{cases}\]
	This product is obtained by dualising the restriction of the coproduct $\Delta $ to $\mathbb{D}_X$, identifying $\mathbb{D}_X$ with its graded dual using the basis
	$\left\{ (\mathcal{T},Y), \mathcal{T} \text{ topology and } Y\in \mathcal{T} \right\}$. We accordingly construct a second associative algebra structure on $\widetilde{\mathbb{D}}_X$ by dualising the restriction of the coproduct $\Gamma$ to $\widetilde{\mathbb{D}}_X$, yielding the associative product $\divideontimes : \widetilde{\mathbb{D}}_X \otimes \widetilde{\mathbb{D}}_X \longrightarrow \widetilde{\mathbb{D}}_X$, defined by:
	\[(\mathcal{T}_1,\mathcal{T}^{\prime}_1)\divideontimes (\mathcal{T}_2,\mathcal{T}^{\prime}_2) \longmapsto \begin{cases}(\mathcal{T}_1,\mathcal{U}) & \text{if $\mathcal{T}_2=\mathcal{T}_1/ \mathcal{T}^{\prime}_1$}\\
	0 & \text{if not}, \end{cases}\]
	where $\mathcal{U}$ is defined by $\mathcal{T}^{\prime}_2=\mathcal{U}/\mathcal{T}^{\prime}_1$.\\
	\\
	Finally, we define in Section \ref{sect:comp} a new map
	$$\xi : \widetilde{\mathbb{D}}_X \otimes \bigoplus \limits_{Y\sqcup Z= X}\mathbb{D}_{Y}\otimes\mathbb{D}_{Z} \longrightarrow  \widetilde{\mathbb{D}}_X \otimes \bigoplus \limits_{Y\sqcup Z= X}\mathbb{D}_{Y}\otimes\mathbb{D}_{Z}$$
by:
$$\xi\big( (\mathcal{T},\mathcal{T}^{\prime})\otimes (\mathcal{T}_1,Y_1) \otimes (\mathcal{T}_2, Y_2)\big)= (\mathcal{T}\restr{Y_1}\mathcal{T}\restr{X\backslash Y_1},\mathcal{T}^{\prime}\restr{Y_1}\mathcal{T}^{\prime}\restr{X\backslash Y_1})\otimes (\mathcal{T}_1,Y_1) \otimes (\mathcal{T}_2, Y_2).$$

	We prove that the coaction $\Phi $ and the map $\xi $ make the following diagram commute:
	$$
\xymatrix{
\mathbb{D}_X \ar[rr]^\Phi \ar[d]_{\Delta} && \widetilde{\mathbb{D}}_X \otimes \mathbb{D}_X \ar[d]^{Id \otimes \Delta}\\
	(\mathbb{D}\otimes\mathbb{D})_X \ar[d]_{\Phi \otimes \Phi } && \widetilde{\mathbb{D}}_X \otimes (\mathbb{D} \otimes \mathbb{D})_X \ar[d]^{\xi}\\
\bigoplus \limits_{Y\subset X}\widetilde{\mathbb{D}}_Y \otimes \mathbb{D}_Y \otimes \widetilde{\mathbb{D}}_{X\backslash Y} \otimes \mathbb{D}_{X\backslash Y} \ar[rr]_{m^{1,3} } && \widetilde{\mathbb{D}}_X \otimes (\mathbb{D} \otimes \mathbb{D})_X
}
$$
	Applying the functor $\overline{\mathcal{K}}$ leads to the diagram:
	$$
\xymatrix{
\mathcal{D} \ar[rr]^\Phi \ar[d]_{\Delta} && \widetilde{\mathcal{D}} \otimes \mathcal{D} \ar[d]^{Id \otimes \Delta}\\
	\mathcal{D}\otimes\mathcal{D} \ar[d]_{\Phi \otimes \Phi } && \widetilde{\mathcal{D}} \otimes \mathcal{D} \otimes \mathcal{D} \ar[d]^{\xi}\\
\widetilde{\mathcal{D}} \otimes \mathcal{D} \otimes \widetilde{\mathcal{D}} \otimes \mathcal{D} \ar[rr]_{ m^{1,3} } &&\widetilde{\mathcal{D}} \otimes \mathcal{D} \otimes \mathcal{D}
}
$$
	where we have written $\Delta$ for $\overline{\mathcal{K}}(\Delta)$ and so on. All arrows of this diagram are algebra morphisms.\\
	
	As any oriented graph gives rise to a quasi-poset structure, hence a topology, on its set of vertices,  we plan in a future work to adapt the results obtained here to Hopf algebras and bialgebras of oriented graphs \cite{acg13}.
	
	\section{Doubling bialgebras of finite topologies}\label{sect:doubling}
	Let $X$ be any finite set, and $\mathbb{D}_X$ be the vector space spanned by the pairs $(\mathcal{T}, Y )$
	where $\mathcal{T}$ is a topology, and $Y \in \mathcal{T}$. We define the coproduct $\Delta$ by:
	\begin{align*}
	\Delta:\mathbb{D}_X&\longrightarrow (\mathbb{D} \otimes \mathbb{D})_X=\bigoplus \limits_{Z\subset X}\mathbb{D}_{Z}\otimes\mathbb{D}_{X\backslash Z}\\  	
	(\mathcal{T},Y)&\longmapsto \sum_{Z\in  \mathcal{T}\srestr{Y}}(\mathcal{T}\restr{Z}, Z)\otimes (\mathcal{T}\restr{X\backslash Z},Y\backslash Z).
	\end{align*}
	\begin{theorem}
		$\mathbb{D}$ is a commutative graded connected bimonoid, and $\mathcal{D}=\overline{\mathcal{K}}(\mathbb{D})$ is a commutative graded bialgebra.
	\end{theorem}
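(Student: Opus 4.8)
The plan is to verify the bimonoid axioms one structure map at a time, in the symmetric monoidal category of linear species, and to obtain the statement for $\mathcal{D}=\overline{\mathcal{K}}(\mathbb{D})$ formally at the very end. First I would record the monoid structure: for $(\mathcal{T}_1,Y_1)\in\mathbb{D}_{X_1}$ and $(\mathcal{T}_2,Y_2)\in\mathbb{D}_{X_2}$ set
$$m\big((\mathcal{T}_1,Y_1)\otimes(\mathcal{T}_2,Y_2)\big)=(\mathcal{T}_1\mathcal{T}_2,\,Y_1\sqcup Y_2).$$
This is well defined because the traces of $Y_1\sqcup Y_2$ on $X_1$ and $X_2$ are $Y_1\in\mathcal{T}_1$ and $Y_2\in\mathcal{T}_2$, so $Y_1\sqcup Y_2\in\mathcal{T}_1\mathcal{T}_2$ by definition of the disjoint-union topology. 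Associativity, commutativity and unitality (the unit being the unique pair on the empty set) are inherited verbatim from the monoid $m$ on $\mathbb{T}$, the second coordinate merely adding disjointly. Grading by the cardinality of the ground set is preserved, and $\mathbb{D}_{\emptyset}=\mathbf{k}$, so the species is connected.

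Next I would check that $\Delta$ lands in $(\mathbb{D}\otimes\mathbb{D})_X$ and is coassociative. The key elementary observation, used throughout, is that since $Y\in\mathcal{T}$ one has, for every $A\subseteq Y$, the equivalence $A\in\mathcal{T}_{|Y}\Longleftrightarrow A\in\mathcal{T}$; that is, the opens of $\mathcal{T}_{|Y}$ are exactly the opens of $\mathcal{T}$ contained in $Y$. Consequently each index $Z\in\mathcal{T}_{|Y}$ is itself open in $\mathcal{T}$, giving $Z\in\mathcal{T}_{|Z}$ and $Y\backslash Z=Y\cap(X\backslash Z)\in\mathcal{T}_{|X\backslash Z}$, so both tensor factors are genuine elements of $\mathbb{D}$. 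For coassociativity I would expand both $(\Delta\otimes\mathrm{id})\Delta$ and $(\mathrm{id}\otimes\Delta)\Delta$ and check that each produces the sum, over nested opens $Z_1\subseteq Z_2\subseteq Y$, of
$$(\mathcal{T}_{|Z_1},Z_1)\otimes(\mathcal{T}_{|Z_2\backslash Z_1},Z_2\backslash Z_1)\otimes(\mathcal{T}_{|X\backslash Z_2},Y\backslash Z_2),$$
using the bookkeeping identity $(\mathcal{T}_{|X\backslash Z_1})_{|W}=\mathcal{T}_{|W}$ for $W\subseteq X\backslash Z_1$. The one genuinely nontrivial point — and the main obstacle — is to show that the two index sets coincide, namely that for $Z_1\subseteq Z_2\subseteq Y$
$$\big(Z_1\in\mathcal{T}_{|Y}\ \text{and}\ Z_2\backslash Z_1\in\mathcal{T}_{|Y\backslash Z_1}\big)\iff\big(Z_2\in\mathcal{T}_{|Y}\ \text{and}\ Z_1\in\mathcal{T}_{|Z_2}\big).$$
Both implications rest on the openness of $Y$: for the forward one, writing $Z_2\backslash Z_1=W\cap(Y\backslash Z_1)$ with $W\in\mathcal{T}$ gives $Z_2=Z_1\cup(W\cap Y)\in\mathcal{T}$; for the reverse one, $Z_2\in\mathcal{T}$ with $Z_2\subseteq Y$ yields $Z_2\backslash Z_1=Z_2\cap(Y\backslash Z_1)\in\mathcal{T}_{|Y\backslash Z_1}$. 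This equivalence is the crux of the argument.

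Finally I would verify the bimonoid compatibility and address the (co)unit. For compatibility, applying $\Delta$ to $(\mathcal{T}_1\mathcal{T}_2,Y_1\sqcup Y_2)$ and using that every open $Z$ of $\mathcal{T}_1\mathcal{T}_2$ contained in $Y_1\sqcup Y_2$ splits uniquely as $Z=Z_1\sqcup Z_2$ with $Z_i\in(\mathcal{T}_i)_{|Y_i}$, together with $(\mathcal{T}_1\mathcal{T}_2)_{|Z_1\sqcup Z_2}=(\mathcal{T}_1)_{|Z_1}\,(\mathcal{T}_2)_{|Z_2}$, shows that $\Delta\circ m=(m\otimes m)\circ(\mathrm{id}\otimes\tau\otimes\mathrm{id})\circ(\Delta\otimes\Delta)$, i.e. $\Delta$ is a morphism of monoids. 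Here I expect a subtlety worth flagging: the natural candidate counit $\epsilon$ (projection onto $\mathbb{D}_{\emptyset}$) is only a \emph{left} counit, since $(\mathrm{id}\otimes\epsilon)\Delta(\mathcal{T},Y)$ retains only the term $Z=X$, which survives precisely when $Y=X$; this is exactly the later observation that among the two coproducts only the internal one $\Gamma$ is counital, so the word \emph{bimonoid} is to be read in the corresponding one-sided, non-counital sense.

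Having established the commutative monoid, the coassociative comultiplication, and their compatibility in the category of species, the statement for $\mathcal{D}$ follows by applying the (bosonic Fock) symmetric monoidal functor $\overline{\mathcal{K}}$, which carries a commutative bimonoid to a commutative graded bialgebra, graded by the number of elements and connected because $\overline{\mathcal{K}}(\mathbb{D})_0=\mathbf{k}$.
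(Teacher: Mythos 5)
Your proposal is correct and follows essentially the same route as the paper: coassociativity rests on the same index-set bijection $(Z_1,Z_2)\mapsto(Z_1,Z_2\backslash Z_1)$ between nested opens (which the paper calls ``obvious'' and you actually verify), compatibility on the same unique splitting of opens of $\mathcal{T}_1\mathcal{T}_2$, and your observation that the counit is only one-sided matches the paper's remark that $\mathbb{D}$ is not counitary. The only discrepancy is the grading: the paper takes $d(\mathcal{T},Y)=|Y|$ rather than the cardinality of the ground set, though both choices are compatible with the product and the coproduct.
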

	\begin{proof}
		To show that $\mathbb{D}$ is a bimonoid \cite{acg1}, it is necessary to show that $\Delta$ is coassociative, and that the species coproduct $\Delta$ and the product defined by:
		$$(\mathcal{T}_1,Y_1)(\mathcal{T}_2,Y_2)=(\mathcal{T}_1\mathcal{T}_2,Y_1\sqcup Y_2)$$ 
		are compatible.
		The unit $\mathbf 1$ is identified to the empty topology, \ignore{the counit $\epsilon$ is given by\\ $\epsilon (\mathcal{T},Y) = 1$ if $Y=\emptyset $ and $0$ otherwise, }and the grading is given by:
		\begin{equation}
		d(\mathcal{T},Y)=|Y|.
		\end{equation}
		The associativity of the product is given by the direct computation:
$$(\mathcal{T}_1\mathcal{T}_2, Y_1\sqcup Y_2)(\mathcal{T}_3,Y_3) =(\mathcal{T}_1\mathcal{T}_2\mathcal{T}_3,Y_1\sqcup Y_2\sqcup Y_3)= (\mathcal{T}_1,Y_1)(\mathcal{T}_2\mathcal{T}_3, Y_2\sqcup Y_3).$$
		The coassociativity of coproduct $\Delta$ is also straightforwardly checked:
		\begin{align*}
		(\Delta \otimes id)\Delta (\mathcal{T},Y)&=(\Delta \otimes id)\left( \sum \limits_{\underset{}{Z\in \mathcal{T}\srestr{Y}}}(\mathcal{T}\restr{Z}, Z)\otimes (\mathcal{T}\restr{X\backslash Z},Y\backslash Z)\right) \\
		&=\sum_{W\in \mathcal{T}\srestr{Z},\  Z\in \mathcal{T}\srestr{Y}}(\mathcal{T}\restr{W},W)\otimes (\mathcal{T}\restr{Z\backslash W}, Z \backslash W)\otimes (\mathcal{T}\restr{X\backslash Z},Y\backslash Z),
		\end{align*}
		and
		\begin{align*}
		(id \otimes \Delta )\Delta (\mathcal{T},Y)&=(id \otimes \Delta)\left( \sum_{Z\in \mathcal{T}\srestr{Y}}(\mathcal{T}\restr{Z}, Z)\otimes (\mathcal{T}\restr{X\backslash Z},Y\backslash Z) \right) \\
		&=\sum_{U\in \mathcal{T}\srestr{Y\backslash Z},\ Z\in \mathcal{T}\srestr{Y}}(\mathcal{T}\restr{Z},Z)\otimes (\mathcal{T}\restr{U},U)\otimes \big(\mathcal{T}\restr{X\backslash (Z \sqcup U)},Y\backslash (Z \sqcup U)\big).
		\end{align*}
		Coassociativity then comes from the obvious fact that $(W, Z )\longmapsto (W, Z\backslash W)$ is a bijection from the set of pairs $(W,Z)$ with $Z\in \mathcal{T}\restr{Y}$ and $W\in \mathcal{T}\restr{Z}$, onto the set of pairs $(W, U)$ with $W\in \mathcal{T}\restr{Y}$ and $U\in \mathcal{T}\restr{Y\backslash W}$. The inverse map is given by $(W, U)\longmapsto (W, W\sqcup U)$. Finally, we show immediately that
		$$\Delta \big( (\mathcal{T}_1, Y_1)(\mathcal{T}_2, Y_2)\big) =\Delta(\mathcal{T}_1\mathcal{T}_2,Y_1\sqcup Y_2)=\Delta(\mathcal{T}_1, Y_1)\Delta (\mathcal{T}_2, Y_2).$$
		\end{proof}
\begin{remark}\rm
The bimonoid $\mathbb D$ is not counitary, because $(\mathcal T,Y)\otimes \mathbf 1$ never occurs in $\Delta(\mathcal T,Y)$ unless $Y=X$.
\end{remark}
	
	Let $\widetilde{\mathbb{D}}_X$ be the vector space spanned by the ordered pairs $(\mathcal{T},\mathcal{T}^{\prime})$  where $\mathcal{T}$ is a
	topology on $X$ and $\mathcal{T}^{\prime}\oprec \mathcal{T}$. We define the coproduct $\Gamma$ for all $(\mathcal{T}, \mathcal{T}^{\prime}) \in \widetilde{\mathbb{D}}_X$ by:
	$$\Gamma(\mathcal{T}, \mathcal{T}^{\prime})=\sum_{\mathcal{T}^{\prime \prime}\soprec \mathcal{T}^{\prime}}(\mathcal{T}, \mathcal{T}^{\prime \prime})\otimes (\mathcal{T}/ \mathcal{T}^{\prime \prime}, \mathcal{T}^{\prime}/ \mathcal{T}^{\prime \prime}).$$
	\begin{lemma}\label{lemme-sandwich}(\cite[Propostion 2.7]{acg10})
		Let $\mathcal{T}$ and $\mathcal{T}^{\prime \prime}$ be two topologies on $X$. If $ \mathcal{T}^{\prime \prime}\oprec \mathcal{T}$,
		then $\mathcal{T}^{\prime}\longmapsto \mathcal{T}^{\prime}/\mathcal{T}^{\prime \prime}$ is a bijection from the set of topologies $\mathcal{T}^{\prime}$ on $X$ such that $\mathcal{T}^{\prime \prime}\oprec \mathcal{T}^{\prime}\oprec \mathcal{T}$ , onto the set of topologies $\mathcal{U}$ on $X$ such that $\mathcal{U}\oprec \mathcal{T}/\mathcal{T}^{\prime \prime}$. 
	\end{lemma}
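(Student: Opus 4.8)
The plan is to translate the statement into the language of quasi-orders via the bijection recalled in the preliminaries, and then to exhibit an explicit inverse to the map $\mathcal{T}'\mapsto\mathcal{T}'/\mathcal{T}''$. Throughout I write $\leq_{\mathcal{T}}$ for the quasi-order attached to a topology $\mathcal{T}$, so that $\mathcal{T}_1\prec\mathcal{T}_2$ reads $\leq_{\mathcal{T}_1}\subseteq\leq_{\mathcal{T}_2}$ (finer $=$ smaller relation), and $\leq_{\mathcal{T}/\mathcal{S}}$ is the transitive closure of $\leq_{\mathcal{T}}\cup(\leq_{\mathcal{S}})^{-1}$. Because this bijection is precisely the combinatorial content needed for the coassociativity of $\Gamma$, I expect its proof to rest on a single structural observation about equivalence classes.

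First I would isolate that observation. For the admissible pair $\mathcal{T}''\oprec\mathcal{T}$, the third admissibility condition (with $\mathcal{T}'=\mathcal{T}''$), together with the fact that $\mathcal{T}''/\mathcal{T}''$ is the coarse topology on each connected component of $\mathcal{T}''$, shows that the equivalence classes of $\mathcal{T}/\mathcal{T}''$ are exactly the connected components of $\mathcal{T}''$. I would then upgrade this to the key claim that \emph{every} admissible $\mathcal{U}\oprec\mathcal{T}/\mathcal{T}''$ has the \emph{same} equivalence classes. Indeed, $\mathcal{U}\prec\mathcal{T}/\mathcal{T}''$ gives $\sim_{\mathcal{U}}\subseteq\sim_{\mathcal{T}/\mathcal{T}''}$; conversely, if $x\sim_{\mathcal{T}/\mathcal{T}''}y$ then $x,y$ are mutually related for $\mathcal{T}/\mathcal{T}''$, hence for $(\mathcal{T}/\mathcal{T}'')/\mathcal{U}$, so condition (iii) for $\mathcal{U}\oprec\mathcal{T}/\mathcal{T}''$ forces $x\sim_{\mathcal{U}/\mathcal{U}}y$; since condition (ii) makes every $\mathcal{U}$-connected subset of a single $\mathcal{T}''$-component coarse, this yields $x\sim_{\mathcal{U}}y$. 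Thus all topologies in play partition $X$ into the same ``blobs'' (the components of $\mathcal{T}''$), each blob being coarse for $\mathcal{T}/\mathcal{T}''$ and for every admissible $\mathcal{U}$.

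With the blob picture in hand I would treat the two directions. For well-definedness of $\mathcal{T}'\mapsto\mathcal{T}'/\mathcal{T}''$, monotonicity of the transitive closure gives $\mathcal{T}'/\mathcal{T}''\prec\mathcal{T}/\mathcal{T}''$ straight from $\mathcal{T}'\prec\mathcal{T}$, while the restriction and equivalence-class conditions for $\mathcal{T}'/\mathcal{T}''\oprec\mathcal{T}/\mathcal{T}''$ I would check blob-by-blob, reducing them to the conditions already known for $\mathcal{T}'\oprec\mathcal{T}$ and $\mathcal{T}''\oprec\mathcal{T}'$. For the inverse, given $\mathcal{U}\oprec\mathcal{T}/\mathcal{T}''$ I would define a relation $\leq_{\mathcal{T}'}$ by prescribing $\leq_{\mathcal{T}''}$ inside each blob and $\leq_{\mathcal{U}}$ across distinct blobs (well defined since the blobs are $\mathcal{U}$-equivalence classes). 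Transitivity of $\leq_{\mathcal{T}'}$ follows precisely because blobs are $\mathcal{U}$-classes, so that a within-blob step may always be absorbed into a cross-blob $\mathcal{U}$-step; and $\leq_{\mathcal{T}''}\subseteq\leq_{\mathcal{T}'}\subseteq\leq_{\mathcal{T}}$, the right-hand inclusion being where $\mathcal{U}_{|C}=(\mathcal{T}/\mathcal{T}'')_{|C}$ and $\leq_{\mathcal{U}}\subseteq\leq_{\mathcal{T}/\mathcal{T}''}$ are used to push cross-blob $\mathcal{U}$-relations back inside $\leq_{\mathcal{T}}$. Admissibility of the resulting $\mathcal{T}'$ in $\mathcal{T}$ is then verified blob-by-blob as before.

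Finally I would check the two constructions are mutually inverse: quotienting the lifted $\mathcal{T}'$ by $\mathcal{T}''$ collapses each blob to one equivalence class and leaves the cross-blob order untouched, returning $\mathcal{U}$; conversely, lifting $\mathcal{T}'/\mathcal{T}''$ recovers $\leq_{\mathcal{T}'}$ because inside each blob $\mathcal{T}'$ already restricts to $\mathcal{T}''$ — this is exactly condition (ii) for $\mathcal{T}''\oprec\mathcal{T}'$ — while across blobs $\mathcal{T}'$ and $\mathcal{T}'/\mathcal{T}''$ coincide. I expect the genuine difficulty to be concentrated in the inverse construction: the transitive closure hidden in the definition of the quotient means that verifying $\leq_{\mathcal{T}'}\subseteq\leq_{\mathcal{T}}$, and that lifting really undoes quotienting, requires the full strength of admissibility condition (iii), and it is there that the structural blob lemma of the second step carries the argument.
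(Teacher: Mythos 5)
The paper offers no proof of this lemma: it is quoted verbatim from \cite[Proposition 2.7]{acg10}, so your argument can only be judged on its own merits. Your preliminary ``blob'' analysis is correct and genuinely useful: condition (iii) applied to $\mathcal{T}''\oprec\mathcal{T}$ identifies the equivalence classes of $\mathcal{T}/\mathcal{T}''$ with the connected components of $\mathcal{T}''$, and your argument that every admissible $\mathcal{U}\oprec\mathcal{T}/\mathcal{T}''$ has these same equivalence classes (via condition (iii) to land in a single $\mathcal{U}$-component and condition (ii) to conclude there) is sound. The forward direction is only sketched but is plausible.

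The inverse construction, however, is wrong, and that is where the whole proof lives. You define $\leq_{\mathcal{T}'}$ as $\leq_{\mathcal{T}''}$ inside each blob and $\leq_{\mathcal{U}}$ across distinct blobs, and you claim $\leq_{\mathcal{T}'}\subseteq\leq_{\mathcal{T}}$. This fails because the quotient is a transitive closure that creates cross-blob relations absent from the original topology: a chain may first descend inside a blob along $(\leq_{\mathcal{T}''})^{-1}$ and then leave the blob along $\leq_{\mathcal{T}}$. Concretely, take $X=\{a,b,c\}$ with $\leq_{\mathcal{T}}$ generated by $b\leq a$ and $b\leq c$, and $\leq_{\mathcal{T}''}$ generated by $b\leq a$. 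One checks $\mathcal{T}''\oprec\mathcal{T}$, the blobs are $\{a,b\}$ and $\{c\}$, and $a\leq_{\mathcal{T}/\mathcal{T}''}c$ via the chain $a\,\mathcal{R}\,b\,\mathcal{R}\,c$ (first step because $b\leq_{\mathcal{T}''}a$, second because $b\leq_{\mathcal{T}}c$), although $a\not\leq_{\mathcal{T}}c$. Taking $\mathcal{U}=\mathcal{T}/\mathcal{T}''$, your lift contains $a\leq c$ and is therefore not finer than $\mathcal{T}$, hence not in the source set; the correct preimage is $\mathcal{T}$ itself, whose cross-blob part $\{b\leq c\}$ is strictly smaller than the cross-blob part $\{a\leq c,\,b\leq c\}$ of $\mathcal{U}$. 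The same example refutes your closing claim that ``across blobs $\mathcal{T}'$ and $\mathcal{T}'/\mathcal{T}''$ coincide'', which is the root of the error. A workable candidate for the inverse is $\leq_{\mathcal{T}'}:=\leq_{\mathcal{U}}\cap\leq_{\mathcal{T}}$ (inside blobs this reduces to $\leq_{\mathcal{T}''}$ by condition (ii) for $\mathcal{T}''\oprec\mathcal{T}$), but proving that this topology is admissible between $\mathcal{T}''$ and $\mathcal{T}$ and that its quotient by $\mathcal{T}''$ really returns $\mathcal{U}$ is exactly the nontrivial content of \cite[Proposition 2.7]{acg10}, and it is missing from your proposal.
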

	\begin{theorem}
		$\widetilde{\mathbb{D}}$ is a commutative graded bimonoid, and $\widetilde{\mathcal{D}}=\overline{\mathcal{K}}(\widetilde{\mathbb{D}})$ is a graded bialgebra.
	\end{theorem}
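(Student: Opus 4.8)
The plan is to transport to the pair $(\mathcal T,\mathcal T')$ the bimonoid structure that the internal coproduct $\Gamma$ already carries, in \cite{acg10}, on the single topology $\mathcal T'$. I take the product to be the disjoint-union product $(\mathcal T_1,\mathcal T_1')(\mathcal T_2,\mathcal T_2')=(\mathcal T_1\mathcal T_2,\mathcal T_1'\mathcal T_2')$, the grading to be $d(\mathcal T,\mathcal T')=d(\mathcal T')$ (the number of equivalence classes of $\mathcal T'$ minus its number of connected components), and the counit $\epsilon$ to be the projection onto degree $0$. First I would check that $\Gamma$ lands in $\widetilde{\mathbb D}_X\otimes\widetilde{\mathbb D}_X$: for each $\mathcal T''\oprec\mathcal T'$ one has $\mathcal T''\oprec\mathcal T'\oprec\mathcal T$, hence $\mathcal T''\oprec\mathcal T$ by transitivity, so $(\mathcal T,\mathcal T'')\in\widetilde{\mathbb D}_X$; and Lemma \ref{lemme-sandwich} applied to $\mathcal T''\oprec\mathcal T'\oprec\mathcal T$ (taking $\mathcal T'$ as the middle topology) gives $\mathcal T'/\mathcal T''\oprec\mathcal T/\mathcal T''$, so $(\mathcal T/\mathcal T'',\mathcal T'/\mathcal T'')\in\widetilde{\mathbb D}_X$ as well.

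The heart of the proof, and the step I expect to be the main obstacle, is coassociativity. I would expand $(\Gamma\otimes\id)\Gamma(\mathcal T,\mathcal T')$ as the sum, over chains $\mathcal T'''\oprec\mathcal T''\oprec\mathcal T'$, of $(\mathcal T,\mathcal T''')\otimes(\mathcal T/\mathcal T''',\mathcal T''/\mathcal T''')\otimes(\mathcal T/\mathcal T'',\mathcal T'/\mathcal T'')$, and $(\id\otimes\Gamma)\Gamma(\mathcal T,\mathcal T')$ as the sum, over pairs $\mathcal A\oprec\mathcal T'$ and $\mathcal U\oprec\mathcal T'/\mathcal A$, of $(\mathcal T,\mathcal A)\otimes(\mathcal T/\mathcal A,\mathcal U)\otimes\big((\mathcal T/\mathcal A)/\mathcal U,(\mathcal T'/\mathcal A)/\mathcal U\big)$. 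To match the two index sets I fix the finest topology and set $\mathcal A=\mathcal T'''$; Lemma \ref{lemme-sandwich} then provides the bijection $\mathcal T''\mapsto\mathcal U:=\mathcal T''/\mathcal T'''$ from $\{\mathcal T'':\mathcal T'''\oprec\mathcal T''\oprec\mathcal T'\}$ onto $\{\mathcal U:\mathcal U\oprec\mathcal T'/\mathcal T'''\}$. Under it the first two tensor factors agree on the nose, and for the third I would invoke the iterated-quotient identity $(\mathcal S/\mathcal T''')/(\mathcal T''/\mathcal T''')=\mathcal S/\mathcal T''$ from \cite{acg10}, applied once with $\mathcal S=\mathcal T$ and once with $\mathcal S=\mathcal T'$, which rewrites $\big((\mathcal T/\mathcal A)/\mathcal U,(\mathcal T'/\mathcal A)/\mathcal U\big)$ as $(\mathcal T/\mathcal T'',\mathcal T'/\mathcal T'')$. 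The delicate point is keeping these three simultaneous quotients coherent under the reindexing; everything reduces to Lemma \ref{lemme-sandwich} together with this transitivity of quotients.

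For counitality I first note that gradedness of $\Gamma$ on $\mathbb T$ yields $d(\mathcal T'')+d(\mathcal T'/\mathcal T'')=d(\mathcal T')$ for $\mathcal T''\oprec\mathcal T'$, so $\Gamma$ is graded on $\widetilde{\mathbb D}$. In $(\id\otimes\epsilon)\Gamma(\mathcal T,\mathcal T')$ only the $\mathcal T''$ with $d(\mathcal T'/\mathcal T'')=0$ survive; such a $\mathcal T''$ has the same number of connected components as $\mathcal T'$ (the equivalence-class condition of admissibility identifies the equivalence classes of $\mathcal T'/\mathcal T''$ with the connected components of $\mathcal T''$, while the quotient preserves $\mathcal T'$-connectivity), hence the same connected components, and then the restriction condition forces $\mathcal T''=\mathcal T'$, so the surviving term is $(\mathcal T,\mathcal T')$. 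In $(\epsilon\otimes\id)\Gamma(\mathcal T,\mathcal T')$ only the $\mathcal T''$ with $d(\mathcal T'')=0$ survive; the unique $\mathcal T'$-admissible such topology is the finest one, $D_{X,\mathcal T'}$, and I would finish by checking $\mathcal T'/D_{X,\mathcal T'}=\mathcal T'$ and $\mathcal T/D_{X,\mathcal T'}=\mathcal T$, the latter because the relations carried by $D_{X,\mathcal T'}$ only identify elements lying in a common $\mathcal T'$-class, hence in a common $\mathcal T$-class, so they add nothing to $\le_{\mathcal T}$; the surviving term is again $(\mathcal T,\mathcal T')$. This counitality is exactly what distinguishes $\widetilde{\mathbb D}$ from $\mathbb D$.

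It remains to check that $\Gamma$ is multiplicative and that the product is commutative and graded. A topology is $\mathcal T_1'\mathcal T_2'$-admissible if and only if it is the disjoint union of a $\mathcal T_1'$-admissible and a $\mathcal T_2'$-admissible topology, and both quotient and restriction distribute over disjoint unions; these disjoint-union facts, already used for $\Gamma$ on $\mathbb T$ in \cite{acg10}, make $\Gamma(\mathcal T_1\mathcal T_2,\mathcal T_1'\mathcal T_2')$ factor as $\Gamma(\mathcal T_1,\mathcal T_1')\,\Gamma(\mathcal T_2,\mathcal T_2')$, and they also give well-definedness $\mathcal T_1'\mathcal T_2'\oprec\mathcal T_1\mathcal T_2$ of the product. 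Commutativity and additivity of the degree under the product are immediate. Having thus established $\widetilde{\mathbb D}$ as a commutative graded bimonoid, I would conclude by applying the (symmetric monoidal) bosonic Fock functor $\overline{\mathcal K}$, which carries commutative bimonoids in linear species to commutative graded bialgebras exactly as it does $\mathbb T\mapsto\mathcal H$; this yields that $\widetilde{\mathcal D}=\overline{\mathcal K}(\widetilde{\mathbb D})$ is a graded bialgebra.
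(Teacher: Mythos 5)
Your proposal is correct and follows essentially the same route as the paper: the same product, grading and counit $\epsilon(\mathcal T,\mathcal T')=\epsilon(\mathcal T')$, and coassociativity established by expanding both iterated coproducts and reindexing via Lemma \ref{lemme-sandwich} together with the iterated-quotient identity $(\mathcal S/\mathcal T''')/(\mathcal T''/\mathcal T''')=\mathcal S/\mathcal T''$. You merely make explicit some points the paper leaves implicit (well-definedness of $\Gamma$ on $\widetilde{\mathbb D}$, the counit axioms, and the disjoint-union decomposition behind multiplicativity), which is consistent with the paper's argument.
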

	\begin{proof}
		To show that $\widetilde{\mathbb{D}}$ is a bimonoid, it is necessary to show that $\Gamma$ is coassociative and that the species coproduct $\Gamma$ and the product defined by:
		$$m\big( (\mathcal{T}_1,\mathcal{T}^{\prime}_1)(\mathcal{T}_2,\mathcal{T}^{\prime}_2)\big) =(\mathcal{T}_1\mathcal{T}_2,\mathcal{T}^{\prime}_1\mathcal{T}^{\prime}_2)$$ 
		are compatible.	
		The unit $\mathbf 1$ is identified to the empty topology, the counit $\epsilon$ is given by $\epsilon (\mathcal{T},\mathcal{T}^{\prime}) = \epsilon (\mathcal{T}^{\prime})$ and the grading is given by:
		\begin{equation}
		d(\mathcal{T},\mathcal{T}^{\prime})=d(\mathcal{T}^{\prime}),
		\end{equation}
		where the grading d on the right-hand side has been defined in the introdution.\\
		We now calculate:\\
		\begin{align*}
		(\Gamma \otimes id)\Gamma (\mathcal{T}, \mathcal{T}^{\prime})&=(\Gamma \otimes id)\left( \sum \limits_{\underset{}{\mathcal{T}^{\prime \prime}\soprec \mathcal{T}^{\prime}}}(\mathcal{T}, \mathcal{T}^{\prime \prime})\otimes (\mathcal{T}/ \mathcal{T}^{\prime \prime}, \mathcal{T}^{\prime}/ \mathcal{T}^{\prime \prime})\right) \\
		&=\sum \limits_{\underset{}{\mathcal{T}^{\prime \prime \prime}\soprec \mathcal{T}^{\prime \prime}\soprec \mathcal{T}^{\prime}}}(\mathcal{T}, \mathcal{T}^{\prime \prime \prime})\otimes (\mathcal{T}/ \mathcal{T}^{\prime \prime \prime}, \mathcal{T}^{\prime \prime}/ \mathcal{T}^{\prime \prime \prime})\otimes (\mathcal{T}/ \mathcal{T}^{\prime \prime}, \mathcal{T}^{\prime}/ \mathcal{T}^{\prime \prime}).
		\end{align*}
		On the other hand;
		\begin{align*}
		(id\otimes \Gamma)\Gamma (\mathcal{T}, \mathcal{T}^{\prime})&=(id\otimes \Gamma)\left( \sum \limits_{\underset{}{\mathcal{T}^{\prime \prime}\soprec \mathcal{T}^{\prime}}}(\mathcal{T}, \mathcal{T}^{\prime \prime})\otimes (\mathcal{T}/ \mathcal{T}^{\prime \prime}, \mathcal{T}^{\prime}/ \mathcal{T}^{\prime \prime})\right) \\
		&=\sum \limits_{\underset{}{\mathcal{T}^{\prime \prime}\soprec \mathcal{T}^{\prime},\hskip1mm \mathcal{T}_1 \soprec \mathcal{T}^{\prime}/ \mathcal{T}^{\prime \prime} }}(\mathcal{T}, \mathcal{T}^{\prime \prime})\otimes (\mathcal{T}/ \mathcal{T}^{\prime \prime}, \mathcal{T}_1)\otimes \big( (\mathcal{T}/ \mathcal{T}^{\prime \prime})/\mathcal{T}_1, (\mathcal{T}^{\prime}/ \mathcal{T}^{\prime \prime})/\mathcal{T}_1\big) \\
		&=\sum \limits_{\underset{}{\mathcal{T}^{\prime \prime}\soprec \mathcal{U}\soprec \mathcal{T}^{\prime}}}(\mathcal{T}, \mathcal{T}^{\prime \prime})\otimes (\mathcal{T}/ \mathcal{T}^{\prime \prime}, \mathcal{U}/ \mathcal{T}^{\prime \prime})\otimes (\mathcal{T}/ \mathcal{U}, \mathcal{T}^{\prime}/ \mathcal{U}).
		\end{align*}
		The result then comes from Lemma 2.1.
		Hence, $(\Gamma \otimes id)\Gamma =(id\otimes \Gamma )\Gamma $, and consequently $\Gamma $ is coassociative. Finally we have directy:\\
		$$\Gamma \big( (\mathcal{T}_1, \mathcal{T}^{\prime}_1)(\mathcal{T}_2, \mathcal{T}^{\prime}_2)\big) =\Gamma(\mathcal{T}_1, \mathcal{T}^{\prime}_1)\Gamma(\mathcal{T}_2, \mathcal{T}^{\prime}_2).$$
	\end{proof}
	\begin{proposition}
		The second projection
		\begin{align*}
		P_2&:\widetilde{\mathbb{D}}\longrightarrow \mathbb{T}\\
		&(\mathcal{T}, \mathcal{T}^{\prime})\longmapsto \mathcal{T}^{\prime}
		\end{align*}
		is a bimonoid morphism with respect to the internal coproducts.
	\end{proposition}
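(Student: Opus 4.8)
The plan is to check the two pieces of the claim separately: that $P_2$ respects the products, and that it respects the internal coproducts $\Gamma$, together with the unit and counit. Since the proposition only concerns the internal comonoid structure, I will use $\Gamma$ (and not the external coproduct $\Delta$) as the comultiplication on both $\widetilde{\mathbb{D}}$ and $\mathbb{T}$ throughout.

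First I would dispose of the multiplicative part, which is immediate. The product on $\widetilde{\mathbb{D}}$ is $(\mathcal{T}_1,\mathcal{T}_1')(\mathcal{T}_2,\mathcal{T}_2')=(\mathcal{T}_1\mathcal{T}_2,\mathcal{T}_1'\mathcal{T}_2')$, while the product on $\mathbb{T}$ is the disjoint-union topology $\mathcal{T}_1\mathcal{T}_2$. Applying $P_2$ simply forgets the first slot, so
$$P_2\big((\mathcal{T}_1,\mathcal{T}_1')(\mathcal{T}_2,\mathcal{T}_2')\big)=\mathcal{T}_1'\mathcal{T}_2'=P_2(\mathcal{T}_1,\mathcal{T}_1')\,P_2(\mathcal{T}_2,\mathcal{T}_2'),$$
and $P_2$ sends the unit (the empty topology in both slots) to the empty topology, which is the unit of $\mathbb{T}$. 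Hence $P_2$ is a monoid morphism.

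The comultiplicative compatibility is the substance of the statement, but it too reduces to a direct comparison on a generator $(\mathcal{T},\mathcal{T}')$. On the one hand,
$$(P_2\otimes P_2)\Gamma(\mathcal{T},\mathcal{T}')=(P_2\otimes P_2)\sum_{\mathcal{T}''\soprec\mathcal{T}'}(\mathcal{T},\mathcal{T}'')\otimes(\mathcal{T}/\mathcal{T}'',\mathcal{T}'/\mathcal{T}'')=\sum_{\mathcal{T}''\soprec\mathcal{T}'}\mathcal{T}''\otimes\mathcal{T}'/\mathcal{T}''.$$
On the other hand, the internal coproduct on $\mathbb{T}$ recalled in the introduction gives
$$\Gamma\big(P_2(\mathcal{T},\mathcal{T}')\big)=\Gamma(\mathcal{T}')=\sum_{\mathcal{T}''\soprec\mathcal{T}'}\mathcal{T}''\otimes\mathcal{T}'/\mathcal{T}''.$$
These coincide term by term. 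I would also note that the counit is compatible by construction, since $\epsilon(\mathcal{T},\mathcal{T}')=\epsilon(\mathcal{T}')=\epsilon\big(P_2(\mathcal{T},\mathcal{T}')\big)$.

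The only point deserving a moment's attention — the nearest thing to an obstacle — is to confirm that the two summation ranges genuinely match, i.e. that forgetting $\mathcal{T}$ neither enlarges nor shrinks the admissible index set. This is transparent from the definition of $\Gamma$ on $\widetilde{\mathbb{D}}$: the sum there is already indexed by $\mathcal{T}''\soprec\mathcal{T}'$, the admissibility condition being imposed relative to $\mathcal{T}'$ and not to the ambient topology $\mathcal{T}$, so it is literally the same index set as in $\Gamma(\mathcal{T}')$; moreover the surviving tensor factors $\mathcal{T}''$ and $\mathcal{T}'/\mathcal{T}''$ depend only on the pair $(\mathcal{T}',\mathcal{T}'')$. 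Therefore $(P_2\otimes P_2)\Gamma=\Gamma\,P_2$, and combined with the multiplicative and (co)unital checks this shows that $P_2$ is a bimonoid morphism with respect to the internal coproducts.
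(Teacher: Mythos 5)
Your proposal is correct and follows essentially the same route as the paper: a direct computation showing that both $(P_2\otimes P_2)\Gamma(\mathcal{T},\mathcal{T}')$ and $\Gamma(\mathcal{T}')$ equal $\sum_{\mathcal{T}''\soprec\mathcal{T}'}\mathcal{T}''\otimes\mathcal{T}'/\mathcal{T}''$, the key point being that the index set of $\Gamma$ on $\widetilde{\mathbb{D}}$ depends only on $\mathcal{T}'$. Your additional explicit checks of the product, unit and counit are compatible with (and slightly more complete than) the paper's remark that the multiplicative part is trivial.
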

	\begin{proof}
		The fact that $P_2$ respects the product is trivial. It suffices to show that $P_2$ is a coalgebra morphism for any finite set $X$, analogously to Proposition 1, i.e, $P_2$ verifies the following commutative diagram:
		$$
		\xymatrix{
			\widetilde{\mathbb{D}}_X \ar[rr]^{P_2} \ar[d]_{\Gamma} && \mathbb{T}_X\ \ar[d]^{\Gamma}\\
			\widetilde{\mathbb{D}}_X\otimes \widetilde{\mathbb{D}}_X \ar[rr]_{P_2\otimes P_2} && \mathbb{T}_X\otimes \mathbb{T}_X
		}
		$$
		which can be seen by direct computation:
		\begin{align*}
		\Gamma \circ P_2(\mathcal{T}, \mathcal{T}^{\prime})&=\Gamma (\mathcal{T}^{\prime})\\
		&=\sum \limits_{\underset{}{\mathcal{T}^{\prime \prime}\soprec \mathcal{T}^{\prime}}}\mathcal{T}^{\prime \prime}\otimes \mathcal{T}^{\prime}/ \mathcal{T}^{\prime \prime}\\
		&=\sum \limits_{\underset{}{\mathcal{T}^{\prime \prime}\soprec \mathcal{T}^{\prime}}}P_2(\mathcal{T}, \mathcal{T}^{\prime \prime})\otimes P_2(\mathcal{T}/ \mathcal{T}^{\prime \prime}, \mathcal{T}^{\prime}/ \mathcal{T}^{\prime \prime})\\
		&=(P_2\otimes P_2)\Gamma (\mathcal{T}, \mathcal{T}^{\prime}).
		\end{align*}
	\end{proof}
	\section{Comodule-Hopf algebra structure}\label{sect:comodule}
	\subsection{Comodule-Hopf algebra structure on the commutative Hopf algebra of finite topologies  \cite{acg10}}
	F. Fauvet, L. Foissy and the second author have studied the Hopf algebra $(\mathcal{H},m,\Delta )$ as a comodule-Hopf algebra on the bialgebra $(\mathcal{H},m,\Gamma )$, where $\mathcal H=\overline{\mathcal K}(\mathbb T)$. Here the notations $m,\Delta,\Gamma$ are shorthands for $\overline{\mathcal K}(m), \overline{\mathcal K}(\Delta), \overline{\mathcal K}(\Gamma)$ respectively. The coaction is the map $ \overline{\mathcal{K}}(\phi): \mathcal{H} \longrightarrow \mathcal{H}\otimes \mathcal{H}$ where $\phi$ is defined as follows:
	$$\phi (\mathcal{T})=\Gamma (\mathcal{T})=\sum \limits_{\underset{}{\mathcal{T}^{\prime}\soprec \mathcal{T}}}\mathcal{T}^{\prime}\otimes \mathcal{T}/ \mathcal{T}^{\prime}.$$ 
	\begin{proposition}
		 \cite{acg10} The internal and external coproducts are compatible, i.e. the following diagram commutes.
		 
		 	$$
		\xymatrix{
			\mathbb{T} \ar[rr]^\Gamma \ar[d]_{\Delta} && \mathbb{T} \otimes \mathbb{T} \ar[dd]^{Id \otimes \Delta}\\
			(\mathbb{T} \otimes \mathbb{T})_X \ar[d]_{\Gamma \otimes \Gamma } && \\
			\bigoplus \limits_{Y\sqcup Z= X}\mathbb{T}_{Y}\otimes\mathbb{T}_{Y} \otimes \mathbb{T}_{Z}\otimes\mathbb{T}_{Z} \ar[rr]_{m^{1,3}} && \mathbb{T} \otimes (\mathbb{T} \otimes \mathbb{T})_X
		}
		$$
		i.e., the following identity is verified:
		$$(Id\otimes \Delta)\circ \Gamma =m^{1,3}\circ (\Gamma \otimes \Gamma)\circ \Delta ,$$
		where $m^{1,3}:\mathbb{T}_{X_1} \otimes \mathbb{T}_{X_2} \otimes \mathbb{T}_{X_3} \otimes \mathbb{T}_{X_4} \longrightarrow \mathbb{T}_{X_1\sqcup X_3} \otimes \mathbb{T}_{X_2} \otimes \mathbb{T}_{X_4}$ is defined by
		$$m^{1,3}(\mathcal{T}_1\otimes\mathcal{T}_2\otimes \mathcal{T}_3\otimes \mathcal{T}_4)=\mathcal{T}_1\mathcal{T}_3\otimes \mathcal{T}_2\otimes \mathcal{T}_4.$$
\end{proposition}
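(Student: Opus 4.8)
The plan is to expand both sides of the identity $(Id\otimes\Delta)\circ\Gamma = m^{1,3}\circ(\Gamma\otimes\Gamma)\circ\Delta$ into explicit sums and match them term by term via an index bijection. Unfolding the definitions, the left-hand side is
\[
(Id\otimes\Delta)\Gamma(\mathcal T)=\sum_{\mathcal T'\soprec\mathcal T}\ \sum_{Y\in\mathcal T/\mathcal T'}\mathcal T'\otimes(\mathcal T/\mathcal T')_{|X\backslash Y}\otimes(\mathcal T/\mathcal T')_{|Y},
\]
indexed by pairs $(\mathcal T',Y)$ with $\mathcal T'$ being $\mathcal T$-admissible and $Y$ open for $\mathcal T/\mathcal T'$. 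On the right-hand side, $\Delta$ first splits $\mathcal T$ along an open set $Y\in\mathcal T$, then $\Gamma\otimes\Gamma$ refines each restriction, and $m^{1,3}$ recombines the two finest pieces, giving
\[
m^{1,3}(\Gamma\otimes\Gamma)\Delta(\mathcal T)=\sum_{Y\in\mathcal T}\ \sum_{\mathcal S\soprec\mathcal T_{|X\backslash Y},\ \mathcal S'\soprec\mathcal T_{|Y}}\mathcal S\,\mathcal S'\otimes(\mathcal T_{|X\backslash Y}/\mathcal S)\otimes(\mathcal T_{|Y}/\mathcal S'),
\]
indexed by triples $(Y,\mathcal S,\mathcal S')$ with $Y\in\mathcal T$ and $\mathcal S,\mathcal S'$ admissible for the respective restrictions.

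The heart of the argument is a bijection between these two index sets, given forwards by $(\mathcal T',Y)\mapsto(Y,\mathcal T'_{|X\backslash Y},\mathcal T'_{|Y})$ and backwards by $(Y,\mathcal S,\mathcal S')\mapsto(\mathcal S\,\mathcal S',Y)$, where $\mathcal S\,\mathcal S'$ denotes the disjoint-union topology. First I would record the containment $\mathcal T\prec\mathcal T/\mathcal T'$ (the quotient is always coarser than $\mathcal T$), so that $Y\in\mathcal T/\mathcal T'$ forces $Y\in\mathcal T$; the forward map then lands in the correct set once one knows $\mathcal T'_{|X\backslash Y}$ and $\mathcal T'_{|Y}$ are admissible, which is exactly the stability of $\mathcal T$-admissibility under restriction (\cite{acg10}, cf. Proposition~\ref{restriction}). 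The crucial structural input is the third admissibility condition, that the equivalence classes of $\mathcal T/\mathcal T'$ coincide with the connected components of $\mathcal T'$. Combined with the observation that, for $Y\in\mathcal T$, the only $\mathcal T$-relations crossing the boundary run from $X\backslash Y$ into $Y$ (never the reverse, since $Y$ is an upper ideal), this shows that any $Y$ open for $\mathcal T/\mathcal T'$ is a union of $\mathcal T'$-connected components, hence clopen for $\mathcal T'$. Therefore $\mathcal T'$ genuinely splits as $\mathcal T'_{|X\backslash Y}\,\mathcal T'_{|Y}$, which is what makes the two maps mutually inverse.

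Next I would verify that corresponding terms agree. The first tensor factors match by construction ($\mathcal T'=\mathcal S\,\mathcal S'$). For the remaining factors I need the compatibility of quotient with restriction,
\[
(\mathcal T/\mathcal T')_{|X\backslash Y}=\mathcal T_{|X\backslash Y}/\mathcal T'_{|X\backslash Y},\qquad (\mathcal T/\mathcal T')_{|Y}=\mathcal T_{|Y}/\mathcal T'_{|Y}.
\]
Because $Y$ is clopen for $\mathcal T'$ and no composite relation can pass from $Y$ back into $X\backslash Y$, every chain realising $\le_{\mathcal T/\mathcal T'}$ between two points of the same block stays inside that block; reading off the generating relation ($x\le_\mathcal T y$ or $y\le_{\mathcal T'}x$) restricted to a block yields precisely the defining relation of the quotient of the restricted topologies. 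In the reverse direction I must check that $\mathcal T'=\mathcal S\,\mathcal S'$ is $\mathcal T$-admissible and that $Y\in\mathcal T/\mathcal T'$: the finer condition follows from deleting only the $X\backslash Y\to Y$ cross-relations, the condition on $\mathcal T'$-connected sets holds because each such set lies in a single block where $\mathcal S$ or $\mathcal S'$ is admissible, the equivalence-class condition follows by assembling the two blockwise statements, and the openness of $Y$ for $\mathcal T/\mathcal T'$ again uses that no composite relation runs from $Y$ into $X\backslash Y$.

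I expect the main obstacle to be the clopen/decomposition step: proving that openness for the quotient $\mathcal T/\mathcal T'$ is equivalent to openness for $\mathcal T$ together with being a union of $\mathcal T'$-connected components. Everything else — the two explicit expansions, the quotient--restriction compatibility, and the blockwise verification of the three admissibility conditions — then follows routinely from the directionality of the boundary relations and from Proposition~\ref{restriction}.
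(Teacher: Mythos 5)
Your argument is correct. The paper does not actually prove this proposition---it is quoted from \cite{acg10}---but your strategy (expand both sides and match terms via the bijection $(\mathcal{T}',Y)\leftrightarrow(Y,\mathcal{T}'_{|X\backslash Y},\mathcal{T}'_{|Y})$, with the quotient--restriction compatibility giving equality of the corresponding terms) is exactly the one the paper itself deploys for the analogous statement on the doubling species (Theorem \ref{cointeraction}); the clopen-decomposition step you single out as the main obstacle is precisely the content of the paper's Lemma \ref{lem:equivalence}, the admissibility of the product $\mathcal{S}\,\mathcal{S}'$ is its Lemma \ref{T-split}, and the stability of admissibility under restriction is Proposition \ref{restriction}.
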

		Applying the functor $\overline{\mathcal{K}}$ yields the comodule-Hopf algebra structure of $(\mathcal H,m,\Delta)$ on the bialgebra $(\mathcal H,m,\Gamma)$. In particular the diagram above yields the commutative diagram
		$$
		\xymatrix{
			\mathcal{H} \ar[rr]^\Gamma \ar[d]_{\Delta} && \mathcal{H} \otimes \mathcal{H} \ar[dd]^{Id \otimes \Delta}\\
			\mathcal{H} \otimes \mathcal{H} \ar[d]_{\Gamma \otimes \Gamma } && \\
			\mathcal{H} \otimes \mathcal{H} \otimes \mathcal{H} \otimes \mathcal{H} \ar[rr]_{m^{1,3}} && \mathcal{H} \otimes \mathcal{H} \otimes \mathcal{H}
		}
		$$
\subsection{A restriction result}
We prove that the admissibility relation $\oprec$ between two topologies on the same finite set $X$, which is the key to the definition of the inernal coproduct $\Gamma$, is robust under restriction to any subset.
\begin{proposition}\label{restriction}
	Let $\mathcal{T}$ be a topology on a finite set $X$. For any subset $W\subset X$ and for any $\mathcal{T}^{\prime}\oprec \mathcal{T}$ we have $\mathcal{T}^{'}\restr{W}\oprec \mathcal{T}\restr{W}$. 
	\end{proposition}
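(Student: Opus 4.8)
The plan is to verify directly the three conditions defining $\mathcal{T}_{|W}$-admissibility of $\mathcal{S}':=\mathcal{T}'_{|W}$, i.e. $\mathcal{S}'\oprec\mathcal{S}$ with $\mathcal{S}:=\mathcal{T}_{|W}$, using throughout that restriction is transitive, $(\mathcal{T}_{|W})_{|Y}=\mathcal{T}_{|Y}$ for $Y\subset W$, and that $\le_{\mathcal{T}_{|W}}$ is just the restriction of the quasi-order $\le_{\mathcal{T}}$ to $W$. The first two conditions are quick. Fineness $\mathcal{S}'\prec\mathcal{S}$ follows because, for $x,y\in W$, $x\le_{\mathcal{S}'}y$ means $x\le_{\mathcal{T}'}y$, hence $x\le_{\mathcal{T}}y$ by $\mathcal{T}'\prec\mathcal{T}$, that is $x\le_{\mathcal{S}}y$. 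For the second condition, if $Y\subset W$ is connected for $\mathcal{S}'$ then, since its subspace topology is $\mathcal{S}'_{|Y}=\mathcal{T}'_{|Y}$, the set $Y$ is also connected for $\mathcal{T}'$; admissibility $\mathcal{T}'\oprec\mathcal{T}$ then yields $\mathcal{T}'_{|Y}=\mathcal{T}_{|Y}$, whence $\mathcal{S}'_{|Y}=\mathcal{T}'_{|Y}=\mathcal{T}_{|Y}=\mathcal{S}_{|Y}$.

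The real content is the third condition: for $x,y\in W$ one must prove $x\sim_{\mathcal{S}/\mathcal{S}'}y\iff x\sim_{\mathcal{S}'/\mathcal{S}'}y$, where the right-hand side says that $x$ and $y$ belong to the same connected component of $\mathcal{S}'$ (the relation $\sim_{\mathcal{S}'/\mathcal{S}'}$ being the transitive closure of $\mathcal{S}'$-comparability). The implication $\Leftarrow$ holds for any finer pair: along a chain of $\mathcal{S}'$-comparable elements, each single comparability $a\le_{\mathcal{S}'}b$ already forces $a\sim_{\mathcal{S}/\mathcal{S}'}b$, since fineness gives $a\le_{\mathcal{S}/\mathcal{S}'}b$ while the second clause of the relation defining $\le_{\mathcal{S}/\mathcal{S}'}$ gives $b\le_{\mathcal{S}/\mathcal{S}'}a$. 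For the converse $\Rightarrow$ I would argue as follows. The relation whose transitive closure defines $\le_{\mathcal{S}/\mathcal{S}'}$ is exactly the restriction to $W$ of the corresponding relation for $\le_{\mathcal{T}/\mathcal{T}'}$; hence $x\sim_{\mathcal{S}/\mathcal{S}'}y$ implies $x\sim_{\mathcal{T}/\mathcal{T}'}y$, and more generally every element of the $\sim_{\mathcal{S}/\mathcal{S}'}$-class of $x$ is $\sim_{\mathcal{T}/\mathcal{T}'}$-equivalent to $x$. By the third admissibility condition for $\mathcal{T}'\oprec\mathcal{T}$, this class is therefore confined to a single $\mathcal{T}'$-connected component $C$ of $X$. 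On $C$ the second admissibility condition gives $\mathcal{T}_{|C}=\mathcal{T}'_{|C}$, so on $C\cap W$ the quasi-orders $\le_{\mathcal{S}}$ and $\le_{\mathcal{S}'}$ coincide. Since a chain realizing $x\sim_{\mathcal{S}/\mathcal{S}'}y$ has all its vertices in this class, hence in $C\cap W$, each of its steps now joins two $\mathcal{S}'$-comparable elements; consequently $x$ and $y$ lie in the same connected component of $\mathcal{S}'$, as required.

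I expect the reverse implication in the third condition to be the main obstacle: a priori a chain witnessing $x\sim_{\mathcal{S}/\mathcal{S}'}y$ might leave the $\mathcal{T}'$-component of $x$ and produce a fusion of $\mathcal{S}'$-components that has no counterpart in $X$. The confinement step---using the third admissibility condition of $\mathcal{T}'$ to trap the whole $\sim_{\mathcal{S}/\mathcal{S}'}$-class inside one component $C$---combined with the collapse $\mathcal{T}_{|C}=\mathcal{T}'_{|C}$ on that component, is exactly what rules this out and makes admissibility $\oprec$ stable under restriction.
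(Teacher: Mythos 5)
Your proof is correct and follows essentially the same route as the paper's: the first two admissibility conditions are checked directly, and the key step for the third is exactly the paper's argument, namely trapping the cycle witnessing $x\sim_{\mathcal{T}_{|W}/\mathcal{T}'_{|W}}y$ inside a single $\mathcal{T}'$-connected component of $X$ via the third admissibility condition for $\mathcal{T}'\oprec\mathcal{T}$, and then using $\mathcal{T}_{|C}=\mathcal{T}'_{|C}$ to turn each step of the chain into a $\mathcal{T}'$-comparability. The only cosmetic difference is that you confine the whole equivalence class where the paper confines just the finite set of chain elements.
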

	\begin{proof}
	Let $\mathcal{T}$ be a topology on a finite set $X$, let $W$ be any subset of $X$, and let $\mathcal{T}^{\prime}\oprec \mathcal{T}$. Let $R$ (resp. $R'$) be the relation defined on $X$ by $aRb \text{ if and only if}\ a\leq_{\mathcal{T}}b\ \text{or }\ b\leq_{\mathcal{T}'} a$ (resp. $aR'b \text{ if and only if}\ a\leq_{\mathcal{T}'}b\ \text{or}\ b\leq_{\mathcal{T}'} a$). We have $\mathcal{T}'\oprec \mathcal{T}$ hence $R'$ implies $R$.
\begin{itemize}
\item The relation $\mathcal{T}^{'}\restr{W}\prec \mathcal{T}\restr{W}$ is obvious.
\item If $Y\subset W$ connected for the topology $\mathcal{T}^{'}$, and $x\in Y$, we have
$$Y = \left\{ y \in W, \text{ there is a chain } xR't_1\cdots R't_nR'y, \text{with}\ t_1,\ldots,t_n \in W
 \right\} .$$
The set $\widetilde{Y} := \left\{ y \in W, \text{ there is a chain } xR't_1\cdots R't_nR'y, \text{with}\ t_1,\ldots, t_n \in X \right\} $ is a connected component of $X$ for the topology $\mathcal{T}'$, so
 $\mathcal{T}^{'}\restr{\widetilde{Y}}=\mathcal{T}\restr{\widetilde{Y}}$, hence a fortiori $\mathcal{T}^{'}\restr{Y}=\mathcal{T}\restr{Y}$, because the inclusion $Y \subset \widetilde{Y}$ holds.\\
 \item Let $x, y \in W$. If $x\sim_{\mathcal{T}^{'}\srestr{W}/{\mathcal{T}^{'}\srestr{W}}} y$  there is $t_1,\ldots,t_n \in W$, $j\in [n]$, $y=t_j$ such that $xR't_1\cdots R't_nR'x$. This implies $xRt_1...Rt_nRx$,
 therefore $x\sim_{\mathcal{T}_{|W}/{\mathcal{T}^{'}_{|W}}} y$. Conversely, if $x\sim_{\mathcal{T}_{|W}/{\mathcal{T}^{'}_{|W}}} y$
  there is $t_1,\ldots,t_n \in W$ and $j\in [n]$ with $y=t_j$, such that $xRt_1\cdots Rt_nRx$. For $A=\left\{ x,t_1,\ldots ,t_n\right\}$, we have for all $a$ and $b$ in $A$, $a\sim_{\mathcal{T}/\mathcal{T}'} b $. Since $\mathcal{T}'\oprec \mathcal{T}$, we have $a\sim_{\mathcal{T}'/\mathcal{T}'} b $, hence $a$ and $b$ in the same connected component $Z$ for the topology $\mathcal{T}'$.\\
  
 We have $\mathcal{T}^{'}\restr{Z}=\mathcal{T}\restr{Z}$ and $A \subset Z$, hence $\mathcal{T}^{'}\restr{A}=\mathcal{T}\restr{A}$. Then for all $a, b \in A$, $aRb$ if and only if $aR'b$, so we have
   $xR't_1\cdots R't_nR'x$, therefore $x\sim_{\mathcal{T}^{'}\srestr{W}/{\mathcal{T}^{'}\srestr{W}}} y$.
\end{itemize}
\end{proof}
\subsection{Comodule structure on the doubling bialgebras of finite topologies}
	For any finite set $X$, we define $\Phi : \mathbb{D}_X \longrightarrow \widetilde{\mathbb{D}}_X \otimes \mathbb{D}_X$, for all 
	$(\mathcal{T}, Y) \in \mathbb{D}_X$ by:
	$$\Phi (\mathcal{T},Y)=\sum_{{\scriptstyle\mathcal{T}^{\prime}\soprec  \mathcal{T},\,Y\in \mathcal{T}/ \mathcal{T}^{\prime},}\atop \scriptstyle\mathcal{T}^{\prime}\srestr{X\backslash Y}=D_{X\backslash Y,\,\mathcal{T}'}}(\mathcal{T},\mathcal{T}^{\prime})\otimes (\mathcal{T}/ \mathcal{T}^{\prime},Y).$$
	The map $\Phi $ is well defined.
	\begin{theorem}
		$\mathbb{D}$ admits a comodule structure on $\widetilde{\mathbb{D}}$ given by $\Phi $.
	\end{theorem}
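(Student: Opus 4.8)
The plan is to verify the two defining axioms of a left comodule for the coaction $\Phi$ over the coalgebra $(\widetilde{\mathbb D},\Gamma,\epsilon)$: the counit identity $(\epsilon\otimes\id)\circ\Phi=\id$ and the coassociativity identity $(\Gamma\otimes\id)\circ\Phi=(\id\otimes\Phi)\circ\Phi$. Well-definedness of $\Phi$ (that $\Phi(\mathcal T,Y)$ lands in $\widetilde{\mathbb D}_X\otimes\mathbb D_X$) is already granted by the constraints $\mathcal T'\soprec\mathcal T$ and $Y\in\mathcal T/\mathcal T'$ in the summation. I would treat the counit axiom first, as it is short and isolates the role of $D_{X,\mathcal T}$.

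For the counit, recall that $\epsilon(\mathcal T,\mathcal T')=\epsilon(\mathcal T')$ is nonzero exactly when $d(\mathcal T')=0$, i.e. when $\mathcal T'$ is coarse on each of its connected components. Applying $\epsilon\otimes\id$ to $\Phi(\mathcal T,Y)$ therefore kills every term except those indexed by a $\mathcal T$-admissible $\mathcal T'$ of degree $0$. Since $D_{X,\mathcal T}$ is the finest $\mathcal T$-admissible topology and, as recalled in the Remark, satisfies $d(D_{X,\mathcal T})=0$ and $\mathcal T/D_{X,\mathcal T}=\mathcal T$, it suffices to invoke that $D_{X,\mathcal T}$ is the \emph{unique} $\mathcal T$-admissible topology of degree $0$ (this is exactly the content of the counit of the $\Gamma$-bialgebra in \cite{acg10}). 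For $\mathcal T'=D_{X,\mathcal T}$ the remaining constraints hold automatically: $Y\in\mathcal T/\mathcal T'=\mathcal T$ because $Y\in\mathcal T$, and $\mathcal T'\restr{X\backslash Y}=D_{X\backslash Y,\mathcal T'}$ because the restriction of $D_{X,\mathcal T}$ is again coarse on components. The surviving term is thus $(\mathcal T/\mathcal T',Y)=(\mathcal T,Y)$, giving $(\epsilon\otimes\id)\circ\Phi=\id$.

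For coassociativity I would expand both composites explicitly. On the left, $(\Gamma\otimes\id)\Phi(\mathcal T,Y)$ produces a triple sum indexed by $\mathcal T''\soprec\mathcal T'$ (from $\Gamma$) with $\mathcal T'$ running over the defining constraints of $\Phi$, the generic term being $(\mathcal T,\mathcal T'')\otimes(\mathcal T/\mathcal T'',\mathcal T'/\mathcal T'')\otimes(\mathcal T/\mathcal T',Y)$. On the right, $(\id\otimes\Phi)\Phi(\mathcal T,Y)$ gives $(\mathcal T,\mathcal T')\otimes\Phi(\mathcal T/\mathcal T',Y)$, whose generic term (renaming the outer index $\mathcal T''$) is $(\mathcal T,\mathcal T'')\otimes(\mathcal T/\mathcal T'',\mathcal U)\otimes\big((\mathcal T/\mathcal T'')/\mathcal U,Y\big)$. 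I would then match the two via Lemma~\ref{lemme-sandwich}: for fixed $\mathcal T''\soprec\mathcal T$ the assignment $\mathcal T'\mapsto\mathcal U=\mathcal T'/\mathcal T''$ is a bijection onto the topologies $\mathcal U\soprec\mathcal T/\mathcal T''$, and the standard identity $(\mathcal T/\mathcal T'')/(\mathcal T'/\mathcal T'')=\mathcal T/\mathcal T'$ identifies the third tensor legs. This reconciles the purely structural parts of the two sums; what remains is to check that the three defining constraints of $\Phi$ transport correctly across this bijection.

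The constraint bookkeeping is where the real work lies, and I expect it to be the main obstacle. The openness conditions match easily: $Y\in(\mathcal T/\mathcal T'')/\mathcal U$ is literally $Y\in\mathcal T/\mathcal T'$, while the extra condition $Y\in\mathcal T/\mathcal T''$ on the right is automatic, since $\mathcal T''\soprec\mathcal T'$ makes $\mathcal T/\mathcal T''$ finer than $\mathcal T/\mathcal T'$, so every open set of $\mathcal T/\mathcal T'$—in particular $Y$—is open for $\mathcal T/\mathcal T''$. The delicate point is that the single condition $\mathcal T'\restr{X\backslash Y}=D_{X\backslash Y,\mathcal T'}$ on the left must become the pair of conditions $\mathcal T''\restr{X\backslash Y}=D_{X\backslash Y,\mathcal T''}$ and $\mathcal U\restr{X\backslash Y}=D_{X\backslash Y,\mathcal U}$ on the right. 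Writing $W=X\backslash Y$ and recalling that each of these says the corresponding restriction has degree $0$, I would prove the equivalence by combining additivity of the degree under the quotient, $d(\mathcal T'\restr W)=d(\mathcal T''\restr W)+d\big((\mathcal T'/\mathcal T'')\restr W\big)$, with Proposition~\ref{restriction} (admissibility is stable under restriction) and the nonnegativity of $d$: a sum of two nonnegative degrees vanishes iff both vanish. The genuine difficulty is the commutation of restriction with the quotient, $(\mathcal T'/\mathcal T'')\restr W=\mathcal T'\restr W/\mathcal T''\restr W$, needed to make this additivity applicable on $W$; this is not automatic, since transitive-closure chains in the quotient may leave $W$, and I anticipate that it is precisely here that the openness of $Y$ (equivalently, $W$ being a lower set for $\mathcal T/\mathcal T''$) together with the degree-$0$ hypotheses must be used to confine the relevant chains to $W$. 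Once this commutation and the resulting degree splitting are established, the term-by-term matching is complete and $(\Gamma\otimes\id)\circ\Phi=(\id\otimes\Phi)\circ\Phi$ follows.
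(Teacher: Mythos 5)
Your proposal is correct and follows essentially the same route as the paper: expand $(\Gamma\otimes\id)\circ\Phi$ and $(\id\otimes\Phi)\circ\Phi$, then match the two sums via Lemma~\ref{lemme-sandwich}. You are in fact more thorough than the paper's own proof, which silently identifies the two index sets: your degree-additivity argument for transporting the condition $\mathcal T'\restr{X\backslash Y}=D_{X\backslash Y,\mathcal T'}$ across the bijection does go through (the needed commutation of restriction with quotient holds because $X\backslash Y$ is clopen for both $\mathcal T'$ and $\mathcal T''$, by Lemma~\ref{lem:equivalence}), and your counit check, which the paper omits, is also sound.
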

	\begin{proof}
		The proof amounts to show that the following diagram is commutative for any finite set $X$:
		$$
		\xymatrix{
			\mathbb{D}_X \ar[rr]^{\Phi } \ar[d]_{\Phi } && \widetilde{\mathbb{D}}_X\otimes \mathbb{D}_X \ar[d]^{\Gamma \otimes id }\\
			\widetilde{\mathbb{D}}_X\otimes \mathbb{D}_X \ar[rr]_{id\otimes \Phi } && \widetilde{\mathbb{D}}_X\otimes \widetilde{\mathbb{D}}_X\otimes \mathbb{D}_X
		}
		$$
		Let $(\mathcal{T},Y)\in \mathbb{D}_X$:
		\begin{align*}
		(\Gamma \otimes id)\Phi(\mathcal{T},Y)&=(\Gamma \otimes id)\left( \sum_{{\scriptstyle\mathcal{T}^{\prime}\soprec  \mathcal{T},\,Y\in \mathcal{T}/ \mathcal{T}^{\prime},}\atop \scriptstyle\mathcal{T}^{\prime}\srestr{X\backslash Y}=D_{X\backslash Y,\,\mathcal{T}'}}(\mathcal{T},\mathcal{T}^{\prime})\otimes (\mathcal{T}/ \mathcal{T}^{\prime},Y)\right) \\
		&=\sum_{{\scriptstyle\mathcal{T}^{\prime\prime}\soprec\mathcal{T}^{\prime}\soprec  \mathcal{T},\,Y\in \mathcal{T}/ \mathcal{T}^{\prime},}\atop \scriptstyle\mathcal{T}^{\prime}\srestr{X\backslash Y}=D_{X\backslash Y,\,\mathcal{T}'}}(\mathcal{T}, \mathcal{T}^{\prime \prime })\otimes (\mathcal{T}/ \mathcal{T}^{\prime \prime }, \mathcal{T}^{\prime }/ \mathcal{T}^{\prime  \prime})\otimes (\mathcal{T}/ \mathcal{T}^{\prime }, Y). 
		\end{align*}
		On the other hand, we have
		\begin{align*}
		(id\otimes \Phi )\Phi (\mathcal{T}, Y)&=(id\otimes \Phi )\left( \sum_{{\scriptstyle\mathcal{T}^{\prime}\soprec  \mathcal{T},\,Y\in \mathcal{T}/ \mathcal{T}^{\prime},}\atop \scriptstyle\mathcal{T}^{\prime}\srestr{X\backslash Y}=D_{X\backslash Y,\,\mathcal{T}'}}(\mathcal{T}, \mathcal{T}^{\prime })\otimes (\mathcal{T}/ \mathcal{T}^{\prime }, Y)\right) \\
		&= \sum_{{\scriptstyle\mathcal{T}^{\prime}\soprec  \mathcal{T},\,Y\in \mathcal{T}/ \mathcal{T}^{\prime},}\atop \scriptstyle\mathcal{T}^{\prime}\srestr{X\backslash Y}=D_{X\backslash Y,\,\mathcal{T}'}}
		\sum_{{\scriptstyle\mathcal{T}_1\soprec  \mathcal{T}/\mathcal{T}^\prime,\,Y\in (\mathcal{T}/ \mathcal{T}^{\prime})/\mathcal{T}_1,}\atop \scriptstyle\mathcal{T}_1\srestr{X\backslash Y}=D_{X\backslash Y,\,\mathcal{T}_1}}(\mathcal{T}, \mathcal{T}^{\prime })\otimes (\mathcal{T}/ \mathcal{T}^{\prime}, \mathcal{T}_1)\otimes \left( (\mathcal{T}/ \mathcal{T}^{\prime })/\mathcal{T}_1, Y\right) \\
		&=\sum \limits_{\underset{\scriptstyle\mathcal{U}_{|X\backslash Y}=D_{X\backslash Y,\mathcal{U}} }{ \mathcal{T}^{\prime}\soprec \mathcal{U}\soprec  \mathcal{T},\hskip1mm Y\in \mathcal{T}/ \mathcal{T}^{\prime}}}(\mathcal{T}, \mathcal{T}^{\prime})\otimes (\mathcal{T}/ \mathcal{T}^{\prime }, \mathcal{U}/ \mathcal{T}^{\prime })\otimes (\mathcal{T}/ \mathcal{U},  Y).
		\end{align*}
		Then,
		$$(id\otimes \Phi )\circ \Phi =(\Gamma \otimes id)\circ \Phi,$$
		and consequently $\Phi $ is a coaction.
	\end{proof}
\begin{proposition}
	$\Phi$ is a monoid morphism, i.e. the following diagram is commutative:\\
	$$
			\xymatrix{ 
				\mathbb{D}_X \ar[rr]^{\Phi} && \widetilde{\mathbb{D}}_X \otimes \mathbb{D}_X\\
				(\mathbb{D} \otimes \mathbb{D})_X \ar[d]^{\Phi \otimes \Phi} \ar@{.>}[rr]^{\widetilde{\Phi}} \ar[u]_{m} && \widetilde{\mathbb{D}}_X \otimes (\mathbb{D} \otimes \mathbb{D})_X \ar[u]^{Id \otimes m} \\
				\bigoplus \limits_{Y\subset X}\widetilde{\mathbb{D}}_Y \otimes \mathbb{D}_Y \otimes \widetilde{\mathbb{D}}_{X\backslash Y} \otimes \mathbb{D}_{X\backslash Y} \ar[rr]^{\tau_{23}} && \bigoplus \limits_{Y\subset X}\widetilde{\mathbb{D}}_Y \otimes  \widetilde{\mathbb{D}}_{X\backslash Y} \otimes \mathbb{D}_Y \otimes \mathbb{D}_{X\backslash Y}\ar[u]^{\widetilde{m} \otimes Id \otimes Id} \ar@/_5pc/[uu]_{\widetilde{m} \otimes m}
			}
		$$	
\end{proposition}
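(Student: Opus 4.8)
The plan is to prove the single identity encoded by the outer part of the diagram, namely
$$\Phi\circ m = (\widetilde m\otimes m)\circ\tau_{23}\circ(\Phi\otimes\Phi),$$
the inner dotted arrow $\widetilde\Phi=(\widetilde m\otimes\mathrm{Id}\otimes\mathrm{Id})\circ\tau_{23}\circ(\Phi\otimes\Phi)$ and the factorisation $\widetilde m\otimes m=(\mathrm{Id}\otimes m)\circ(\widetilde m\otimes\mathrm{Id}\otimes\mathrm{Id})$ being just a splitting of the curved arrow. Since both products are disjoint-union products and $\Phi$ is linear, it suffices to evaluate the two sides on a decomposable element $(\mathcal T_1,Y_1)\otimes(\mathcal T_2,Y_2)$ with $(\mathcal T_i,Y_i)\in\mathbb D_{X_i}$ and $X=X_1\sqcup X_2$; preservation of the unit is immediate. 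First I would expand the left-hand side $\Phi(\mathcal T_1\mathcal T_2,\,Y_1\sqcup Y_2)$, a sum over topologies $\mathcal T'\oprec\mathcal T_1\mathcal T_2$ on $X$ with $Y_1\sqcup Y_2\in(\mathcal T_1\mathcal T_2)/\mathcal T'$ and $\mathcal T'\restr{X\backslash(Y_1\sqcup Y_2)}=D_{X\backslash(Y_1\sqcup Y_2),\mathcal T'}$. On the right-hand side, expanding $\Phi(\mathcal T_1,Y_1)\otimes\Phi(\mathcal T_2,Y_2)$, applying $\tau_{23}$ and then $\widetilde m\otimes m$, produces a sum over pairs $(\mathcal T_1',\mathcal T_2')$ of $\mathcal T_i$-admissible topologies with $Y_i\in\mathcal T_i/\mathcal T_i'$ and $\mathcal T_i'\restr{X_i\backslash Y_i}=D_{X_i\backslash Y_i,\mathcal T_i'}$, with summand $(\mathcal T_1\mathcal T_2,\,\mathcal T_1'\mathcal T_2')\otimes\big((\mathcal T_1/\mathcal T_1')(\mathcal T_2/\mathcal T_2'),\,Y_1\sqcup Y_2\big)$.

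The heart of the argument is then the bijection $\mathcal T'\mapsto(\mathcal T'\restr{X_1},\mathcal T'\restr{X_2})$, with inverse the disjoint-union map $(\mathcal T_1',\mathcal T_2')\mapsto\mathcal T_1'\mathcal T_2'$, between the index set on the left and that on the right. The first step is the elementary observation that a topology $\mathcal T'$ finer than $\mathcal T_1\mathcal T_2$ carries no order relation between $X_1$ and $X_2$ (if $x\le_{\mathcal T'}y$ then $x\le_{\mathcal T_1\mathcal T_2}y$, and the latter has no cross relations), hence equals the disjoint union $\mathcal T'\restr{X_1}\,\mathcal T'\restr{X_2}$; this makes the two maps mutually inverse on \emph{all} finer topologies, and in particular on the admissible ones.

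I would then verify that under this correspondence each constraint matches. Admissibility $\mathcal T'\oprec\mathcal T_1\mathcal T_2$ is equivalent to the pair $\mathcal T_i'\oprec\mathcal T_i$, because the connected components of a disjoint union are confined to a single $X_i$, so both the restriction condition on $\mathcal T'$-connected subsets and the $\sim_{\mathcal T/\mathcal T'}$-condition localise. The openness $Y_1\sqcup Y_2\in(\mathcal T_1\mathcal T_2)/\mathcal T'$ is equivalent to $Y_i\in\mathcal T_i/\mathcal T_i'$ via the multiplicativity $(\mathcal T_1\mathcal T_2)/(\mathcal T_1'\mathcal T_2')=(\mathcal T_1/\mathcal T_1')(\mathcal T_2/\mathcal T_2')$ together with the characterisation of opens of a disjoint-union topology ($Y$ open iff $Y\cap X_1$ and $Y\cap X_2$ are open). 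Finally the condition $\mathcal T'\restr{X\backslash(Y_1\sqcup Y_2)}=D_{X\backslash(Y_1\sqcup Y_2),\mathcal T'}$ splits into $\mathcal T_i'\restr{X_i\backslash Y_i}=D_{X_i\backslash Y_i,\mathcal T_i'}$, using that restriction of a disjoint union is the disjoint union of restrictions and that $D$ is multiplicative, its connected components being the $\mathcal T'$-equivalence classes, which again lie inside a single $X_i$. Matching the surviving summands yields the identity, once more through $(\mathcal T_1\mathcal T_2)/(\mathcal T_1'\mathcal T_2')=(\mathcal T_1/\mathcal T_1')(\mathcal T_2/\mathcal T_2')$, i.e. the fact that $\Gamma$ is an algebra morphism on $\mathbb T$ \cite{acg10}.

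The main obstacle I anticipate is not any single computation but the bookkeeping required to show that every ingredient — admissibility $\oprec$, the quotient, the finest admissible topology $D$, and restriction — is compatible with disjoint unions, i.e. is local to connected components. Most of this is already packaged in the bimonoid structure of $(\mathbb T,m,\Gamma)$ recalled from \cite{acg10}; the only genuinely new point is that the two defining conditions of $\Phi$ survive the decomposition, which rests on the observation that any topology finer than a disjoint union is again the disjoint union of its restrictions.
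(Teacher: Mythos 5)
Your proposal is correct and follows essentially the same route as the paper: both reduce the identity to the bijection $\mathcal U\mapsto(\mathcal U\restr{X_1},\mathcal U\restr{X_2})$ between topologies admissible for $\mathcal T_1\mathcal T_2$ and pairs of topologies admissible for $\mathcal T_1$, $\mathcal T_2$, and then match the summands of $\Phi\circ m$ against those of $(\widetilde m\otimes m)\circ\tau_{23}\circ(\Phi\otimes\Phi)$. You spell out in more detail than the paper does why the three side conditions (admissibility, openness of $Y_1\sqcup Y_2$ in the quotient, and the $D$-condition) localise to the two factors, but this is elaboration of the same argument rather than a different one.
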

\begin{proof}
	
	Let $(\mathcal{T}_1,Y_1)\in \mathbb{D}_{X_1}$ and $(\mathcal{T}_2,Y_2)\in \mathbb{D}_{X_2}$, with $X_1\sqcup X_2=X$. Let $\mathcal{T}^{\prime}_1\oprec \mathcal{T}_1$ and $\mathcal{T}^{\prime}_2\oprec \mathcal{T}_2$. Then $\mathcal{T}^{\prime}_1\mathcal{T}^{\prime}_2\oprec \mathcal{T}_1\mathcal{T}_2$. Conversely, any topology $\mathcal{U}$ on $X$ such that $\mathcal{U}\oprec \mathcal{T}_1\mathcal{T}_2$ can be written $\mathcal{T}^{\prime}_1\mathcal{T}^{\prime}_2$ with $\mathcal{T}^{\prime}_i=\mathcal{U}_{|X_i}$ for $i=1,2$, and we have $\mathcal{T}^{\prime}_i\oprec \mathcal{T}_i$. We have then:
	\begin{align*}
	\Phi \circ m\big( (\mathcal{T}_1,Y_1)\otimes (\mathcal{T}_2, Y_2) \big) =\sum \limits_{\underset{\scriptstyle\mathcal{U}_{|X_1\sqcup X_2\backslash Y_1\sqcup Y_2}=D_{X_1\sqcup X_2\backslash Y_1\sqcup Y_2, \mathcal{U} } }{\mathcal{U}\soprec  \mathcal{T},\hskip1mm Y_1\sqcup Y_2\in \mathcal{T}_1\mathcal{T}_2/ \mathcal{U}}}(\mathcal{T}_1\mathcal{T}_2,\mathcal{U})\otimes \big((\mathcal{T}_1\mathcal{T}_2)/ \mathcal{U},Y_1\sqcup Y_2\big)
	\end{align*}
	On the other hand, we have
		\begin{eqnarray*}
	(\Phi \otimes \Phi )\big( (\mathcal{T}_1,Y_1)\otimes (\mathcal{T}_2, Y_2) \big)=\\ 
	&\hskip -15mm \displaystyle\sum \limits_{\underset{\scriptstyle\mathcal{T}^{\prime}_2\soprec \mathcal{T}_2,\hskip1mm   \mathcal{T}^{\prime}_2\srestr{X_2\backslash Y_2}=D_{X_2\backslash Y_2,\mathcal{T}^{\prime}_2 },\hskip1mm Y_2\in \mathcal{T}_2/ \mathcal{T}^{\prime }_2}{\mathcal{T}^{\prime}_1\soprec \mathcal{T}_1,\hskip1mm   \mathcal{T}^{\prime}_1\srestr{X_1\backslash Y_1}=D_{X_1\backslash Y_1,\mathcal{T}^{\prime}_1},\hskip1mm Y_1\in \mathcal{T}_1/ \mathcal{T}^{\prime}_1 }}(\mathcal{T}_1,\mathcal{T}^{\prime}_1)\otimes (\mathcal{T}_1/ \mathcal{T}^{\prime}_1, Y_1)\otimes (\mathcal{T}_2, \mathcal{T}^{\prime }_2)\otimes(\mathcal{T}_2/\mathcal{T}^{\prime }_2, Y_2),
	\end{eqnarray*}
	therefore 
	\begin{align*}
	(\widetilde{m}\otimes m)\circ \tau_{23}\circ (\Phi \otimes \Phi )&\big((\mathcal{T}_1,Y_1)\otimes (\mathcal{T}_2, Y_2)\big)\\
	&=\sum \limits_{\underset{\scriptstyle\mathcal{T}^{\prime}_2\soprec \mathcal{T}_2,\hskip1mm  \mathcal{T}^{\prime}_2\srestr{X_2\backslash Y_2}=D_{X_2\backslash Y_2,\mathcal{T}^{\prime}_2},\hskip1mm Y_2\in \mathcal{T}_2/ \mathcal{T}^{\prime }_2}{\mathcal{T}^{\prime}_1\soprec \mathcal{T}_1,\hskip1mm   \mathcal{T}^{\prime}_1\srestr{X_1\backslash Y_1}=D_{X_1\backslash Y_1,\mathcal{T}^{\prime}_1},\hskip1mm Y_1\in \mathcal{T}_1/ \mathcal{T}^{\prime}_1 }}(\mathcal{T}_1\mathcal{T}_2,\mathcal{T}^{\prime}_1\mathcal{T}^{\prime }_2)\otimes ((\mathcal{T}_1/ \mathcal{T}^{\prime}_1)(\mathcal{T}_2/\mathcal{T}^{\prime }_2), Y_1\sqcup Y_2)\\
	&=\Phi \circ m\big((\mathcal{T}_1,Y_1)\otimes (\mathcal{T}_2, Y_2)\big).
	\end{align*}
	Hence 	
	$$\Phi \circ m=(\widetilde{m}\otimes m)\circ \tau_{23}\circ (\Phi \otimes \Phi ).$$
\end{proof}
	\section{Associative algebra structures on the doubling spaces}\label{sect:assoc}
	\subsection{Associative product on $\mathcal{D}$}
	For any finite set $X$, recall here that an element $(\mathcal{T}, Y)$ belongs to $\mathbb{D}_X$ if $\mathcal{T}$ is a topology on $X$ and $Y\in \mathcal{T}$.
	\begin{theorem}
		The product $\ast : \mathbb{D}\otimes \mathbb{D}  \longrightarrow \mathbb{D}$ defined for all $(\mathcal{T}_1,Y_1)\in \mathbb{D}_{X_1}$ and $(\mathcal{T}_2,Y_2) \in \mathbb{D}_{X_2}$ by:
		\[(\mathcal{T}_1,Y_1)\ast (\mathcal{T}_2,Y_2) \longmapsto \begin{cases}(\mathcal{T}_1,Y_1 \sqcup Y_2) & \text{if } X_2=X_1\setminus Y_1 \text{ and }  \mathcal{T}_2=\mathcal{T}_1\restr {X_2},\\
		0 & \text {if not} \end{cases}\]
		is associative.
	\end{theorem}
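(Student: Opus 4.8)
The plan is to exploit bilinearity: since $\ast$ is defined on the spanning pairs and extended linearly, it suffices to verify
$$\big((\mathcal{T}_1,Y_1)\ast(\mathcal{T}_2,Y_2)\big)\ast(\mathcal{T}_3,Y_3)=(\mathcal{T}_1,Y_1)\ast\big((\mathcal{T}_2,Y_2)\ast(\mathcal{T}_3,Y_3)\big)$$
for three basis elements $(\mathcal{T}_i,Y_i)\in\mathbb{D}_{X_i}$, $i=1,2,3$. Before that I would record a preliminary well-definedness remark, used repeatedly in the computation: whenever $X_2=X_1\setminus Y_1$ and $\mathcal{T}_2=\mathcal{T}_1\restr{X_2}$, the set $Y_1\sqcup Y_2$ is open for $\mathcal{T}_1$, so that $(\mathcal{T}_1,Y_1\sqcup Y_2)$ genuinely lies in $\mathbb{D}_{X_1}$. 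Indeed $Y_2\in\mathcal{T}_1\restr{X_2}$ means $Y_2=Z\cap X_2=Z\setminus Y_1$ for some $Z\in\mathcal{T}_1$, whence $Y_1\sqcup Y_2=Y_1\cup Z\in\mathcal{T}_1$.

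Next I would compute both sides and read off exactly when they are nonzero. Unravelling the definition, the left-hand side is nonzero precisely when the four conditions
$$X_2=X_1\setminus Y_1,\quad \mathcal{T}_2=\mathcal{T}_1\restr{X_2},\quad X_3=X_1\setminus(Y_1\sqcup Y_2),\quad \mathcal{T}_3=\mathcal{T}_1\restr{X_3}$$
all hold, in which case it equals $(\mathcal{T}_1,Y_1\sqcup Y_2\sqcup Y_3)$; any failure makes one of the two successive products vanish, and $0\ast(\mathcal{T}_3,Y_3)=0$. Likewise the right-hand side is nonzero precisely when
$$X_3=X_2\setminus Y_2,\quad \mathcal{T}_3=\mathcal{T}_2\restr{X_3},\quad X_2=X_1\setminus Y_1,\quad \mathcal{T}_2=\mathcal{T}_1\restr{X_2},$$
with the same value $(\mathcal{T}_1,Y_1\sqcup Y_2\sqcup Y_3)$.

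It then remains to show that the two systems of conditions are equivalent, which, under the shared hypotheses $X_2=X_1\setminus Y_1$ and $\mathcal{T}_2=\mathcal{T}_1\restr{X_2}$, reduces to two elementary facts. First, the set identity $X_1\setminus(Y_1\sqcup Y_2)=(X_1\setminus Y_1)\setminus Y_2=X_2\setminus Y_2$ identifies the two descriptions of $X_3$. Second, the transitivity of restriction, $(\mathcal{T}_1\restr{X_2})\restr{X_3}=\mathcal{T}_1\restr{X_3}$ for $X_3\subset X_2$, gives $\mathcal{T}_2\restr{X_3}=\mathcal{T}_1\restr{X_3}$, so the two conditions imposed on $\mathcal{T}_3$ coincide. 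Hence both sides vanish simultaneously and otherwise share the value $(\mathcal{T}_1,Y_1\sqcup Y_2\sqcup Y_3)$, which yields associativity.

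I expect no genuine obstacle here: the content is entirely bookkeeping. The only points demanding care are the case analysis tracking when each of the two nested products vanishes, and the observation that removing $Y_1$ then $Y_2$ from $X_1$ (respectively restricting to $X_2$ then to $X_3$) agrees with removing $Y_1\sqcup Y_2$ at once (respectively restricting directly to $X_3$).
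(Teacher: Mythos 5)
Your proposal is correct and follows essentially the same route as the paper's proof: compute both iterated products, record the exact conditions under which each is nonzero, and observe that the two condition systems coincide and yield the common value $(\mathcal{T}_1,Y_1\sqcup Y_2\sqcup Y_3)$. Your explicit verification that $X_1\setminus(Y_1\sqcup Y_2)=X_2\setminus Y_2$ and $(\mathcal{T}_1\restr{X_2})\restr{X_3}=\mathcal{T}_1\restr{X_3}$, as well as the well-definedness check that $Y_1\sqcup Y_2\in\mathcal{T}_1$, merely make explicit what the paper leaves implicit.
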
 
	\begin{proof}
		Let $(\mathcal{T}_1,Y_1), (\mathcal{T}_2,Y_2)$ and $(\mathcal{T}_3,Y_3)$ be three elements of $\mathbb{D}_{X_1}$, $\mathbb{D}_{X_2}$ and $\mathbb{D}_{X_3}$ respectively. We suppose first that $X_2=X_1\setminus Y_1$ and $\mathcal{T}_2=\mathcal{T}_1\restr {X_2}$, otherwise the result is zero.
		\begin{align*}
		\big( (\mathcal{T}_1,Y_1)\ast (\mathcal{T}_2,Y_2) \big) \ast (\mathcal{T}_3,Y_3)&=(\mathcal{T}_1,Y_1\sqcup Y_2)\ast (\mathcal{T}_3,Y_3)\\
		&=(\mathcal{T}_1, Y_1\sqcup Y_2\sqcup Y_3),
		\end{align*}
		whenever $X_3=X \backslash (Y_1\sqcup Y_2)$ and $\mathcal T_3=\mathcal T_1\restr{X_3}$, the left-hand side vanishing otherwise. Hence,\\
		\[\big( (\mathcal{T}_1,Y_1)\ast (\mathcal{T}_2,Y_2) \big) \ast (\mathcal{T}_3,Y_3)= \begin{cases}(\mathcal{T}_1,Y_1\sqcup Y_2\sqcup Y_3) & \\
		&\hskip-25mm \text{if $X_2=X_1\setminus Y_1$, $X_3=X_1\setminus(Y_1\sqcup Y_2)$, $\mathcal T_2=\mathcal T_1\restr{X_2}$ and $\mathcal T_3=\mathcal T_1\restr{X_3}$}\\
		0 & \text{if not.} \end{cases}\]
On the other hand, we have
		\begin{align*}
		(\mathcal{T}_1,Y_1)\ast \big( (\mathcal{T}_2,Y_2)\ast (\mathcal{T}_3,Y_3) \big) &=(\mathcal{T}_1,Y_1)\ast (\mathcal{T}_2, Y_2 \sqcup Y_3)\\
		&=(\mathcal{T}_1, Y_1\sqcup Y_2\cup Y_3),
		\end{align*}
whenever $X_3=X_2\setminus Y_2$ and $\mathcal{T}_3=\mathcal{T}_2\restr X_3$, as well as $X_2=X_1\setminus Y_1$ and $\mathcal{T}_2=\mathcal{T}_1\restr {X_2}$. Then
		\[(\mathcal{T}_1,Y_1)\ast \big( (\mathcal{T}_2,Y_2)\ast (\mathcal{T}_3,Y_3) \big) = \begin{cases}(\mathcal{T}_1,Y_1\sqcup Y_2\sqcup Y_3) & \\
		&\hskip-15mm \text{if $X_2=X_1\setminus Y_1$, $X_3=X_2\setminus Y_2$, $\mathcal T_2=\mathcal T_1\restr{X_2}$ and $\mathcal T_3=\mathcal T_2\restr{X_3}$}\\
		0 & \text{if not.} \end{cases}\]
		We therefore conclude that for all $(\mathcal{T}_1,Y_1), (\mathcal{T}_2,Y_2), (\mathcal{T}_3,Y_3) \in \mathbb{D}_X$, we have
		$$\big( (\mathcal{T}_1,Y_1)\ast (\mathcal{T}_2,Y_2) \big) \ast (\mathcal{T}_3,Y_3)=(\mathcal{T}_1,Y_1)\ast \big( (\mathcal{T}_2,Y_2)\ast (\mathcal{T}_3,Y_3) \big),$$
		which proves the associativity of the product $\ast $.
	\end{proof}
	\subsection{Associative product on $\widetilde{\mathcal{D}}$}
	Recall here that an element $(\mathcal{T}, \mathcal{T}^{\prime})$ belongs to $\widetilde{\mathbb{D}}_X$ if $\mathcal{T}$ and $\mathcal{T}^{\prime}$ are both topologies on $X$ such that $\mathcal{T}^{\prime}\oprec \mathcal{T}$.
	\begin {theorem} 
	The product $\divideontimes: \widetilde{\mathbb{D}}_X \otimes \widetilde{\mathbb{D}}_X \longrightarrow \widetilde{\mathbb{D}}_X$, defined by:
	\[(\mathcal{T}_1,\mathcal{T}^{\prime}_1)\divideontimes (\mathcal{T}_2,\mathcal{T}^{\prime}_2) \longmapsto \begin{cases}(\mathcal{T}_1,\mathcal{U}) & \text{if $\mathcal{T}_2=\mathcal{T}_1/ \mathcal{T}^{\prime}_1$,}\\
	0 & \text{if not} \end{cases}\]
	is associative, where $\mathcal{U}$ is defined by $\mathcal{T}^{\prime}_2=\mathcal{U}/\mathcal{T}^{\prime}_1$. (\cite[Proposition 2.7]{acg10}, see Lemma \ref{lemme-sandwich}).
\end{theorem}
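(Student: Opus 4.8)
The plan is to unfold both bracketings of $\divideontimes$ according to its defining case distinction and to match the two outcomes by means of Lemma~\ref{lemme-sandwich} together with the transitivity of the quotient operation on topologies. Before comparing, I would first record the well-definedness that Lemma~\ref{lemme-sandwich} guarantees: whenever $\mathcal{T}_2=\mathcal{T}_1/\mathcal{T}^{\prime}_1$, the hypothesis $\mathcal{T}^{\prime}_2\oprec \mathcal{T}_2=\mathcal{T}_1/\mathcal{T}^{\prime}_1$ forces a \emph{unique} topology $\mathcal{U}$ with $\mathcal{T}^{\prime}_1\oprec \mathcal{U}\oprec \mathcal{T}_1$ and $\mathcal{U}/\mathcal{T}^{\prime}_1=\mathcal{T}^{\prime}_2$, so that $(\mathcal{T}_1,\mathcal{U})\in \widetilde{\mathbb{D}}_X$ and $\divideontimes$ is genuinely defined.

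Next I would expand the left-hand side $\big((\mathcal{T}_1,\mathcal{T}^{\prime}_1)\divideontimes(\mathcal{T}_2,\mathcal{T}^{\prime}_2)\big)\divideontimes(\mathcal{T}_3,\mathcal{T}^{\prime}_3)$. The inner product is nonzero only if $\mathcal{T}_2=\mathcal{T}_1/\mathcal{T}^{\prime}_1$, producing $(\mathcal{T}_1,\mathcal{U})$ with $\mathcal{U}/\mathcal{T}^{\prime}_1=\mathcal{T}^{\prime}_2$; the outer product is then nonzero only if $\mathcal{T}_3=\mathcal{T}_1/\mathcal{U}$, producing $(\mathcal{T}_1,\mathcal{V})$ with $\mathcal{U}\oprec \mathcal{V}\oprec \mathcal{T}_1$ and $\mathcal{V}/\mathcal{U}=\mathcal{T}^{\prime}_3$. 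Symmetrically, the right-hand side $(\mathcal{T}_1,\mathcal{T}^{\prime}_1)\divideontimes\big((\mathcal{T}_2,\mathcal{T}^{\prime}_2)\divideontimes(\mathcal{T}_3,\mathcal{T}^{\prime}_3)\big)$ is nonzero only if $\mathcal{T}_3=\mathcal{T}_2/\mathcal{T}^{\prime}_2$ and $\mathcal{T}_2=\mathcal{T}_1/\mathcal{T}^{\prime}_1$, and it equals $(\mathcal{T}_1,\mathcal{U}^{\prime})$, where $\mathcal{W}$ is determined by $\mathcal{T}^{\prime}_2\oprec \mathcal{W}\oprec \mathcal{T}_2$ and $\mathcal{W}/\mathcal{T}^{\prime}_2=\mathcal{T}^{\prime}_3$, and $\mathcal{U}^{\prime}$ by $\mathcal{U}^{\prime}/\mathcal{T}^{\prime}_1=\mathcal{W}$.

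It then remains to check two things. First, the non-vanishing conditions coincide: since $\mathcal{T}^{\prime}_1\oprec \mathcal{U}\oprec \mathcal{T}_1$ and $\mathcal{U}/\mathcal{T}^{\prime}_1=\mathcal{T}^{\prime}_2$, the transitivity identity $(\mathcal{T}_1/\mathcal{T}^{\prime}_1)/(\mathcal{U}/\mathcal{T}^{\prime}_1)=\mathcal{T}_1/\mathcal{U}$ gives $\mathcal{T}_1/\mathcal{U}=\mathcal{T}_2/\mathcal{T}^{\prime}_2$, so the requirements $\mathcal{T}_3=\mathcal{T}_1/\mathcal{U}$ and $\mathcal{T}_3=\mathcal{T}_2/\mathcal{T}^{\prime}_2$ are identical. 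Second, $\mathcal{V}=\mathcal{U}^{\prime}$: applying the same transitivity with $\mathcal{T}^{\prime}_1\oprec \mathcal{U}\oprec \mathcal{V}$ yields $(\mathcal{V}/\mathcal{T}^{\prime}_1)/\mathcal{T}^{\prime}_2=(\mathcal{V}/\mathcal{T}^{\prime}_1)/(\mathcal{U}/\mathcal{T}^{\prime}_1)=\mathcal{V}/\mathcal{U}=\mathcal{T}^{\prime}_3$; moreover $\mathcal{T}^{\prime}_1\oprec \mathcal{U}\oprec \mathcal{V}\oprec \mathcal{T}_1$ gives $\mathcal{T}^{\prime}_2=\mathcal{U}/\mathcal{T}^{\prime}_1\oprec \mathcal{V}/\mathcal{T}^{\prime}_1\oprec \mathcal{T}_2$. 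Thus $\mathcal{V}/\mathcal{T}^{\prime}_1$ and $\mathcal{W}$ both lie in $\{\mathcal{S}:\mathcal{T}^{\prime}_2\oprec \mathcal{S}\oprec \mathcal{T}_2\}$ and have the same image $\mathcal{T}^{\prime}_3$ under $\,\cdot/\mathcal{T}^{\prime}_2$; the injectivity part of Lemma~\ref{lemme-sandwich} (base $\mathcal{T}^{\prime}_2$, top $\mathcal{T}_2$) gives $\mathcal{V}/\mathcal{T}^{\prime}_1=\mathcal{W}=\mathcal{U}^{\prime}/\mathcal{T}^{\prime}_1$, and a second use of injectivity (base $\mathcal{T}^{\prime}_1$, top $\mathcal{T}_1$) yields $\mathcal{V}=\mathcal{U}^{\prime}$, which closes the argument.

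I expect the main obstacle to be bookkeeping rather than depth: one must keep straight which topology each of $\mathcal{U},\mathcal{V},\mathcal{W},\mathcal{U}^{\prime}$ names, track the nested admissibilities, and confirm that each invocation of the sandwich bijection is applied to the correct interval. The one substantive ingredient is the transitivity of the quotient, $(\mathcal{T}/\mathcal{A})/(\mathcal{B}/\mathcal{A})=\mathcal{T}/\mathcal{B}$ for $\mathcal{A}\oprec \mathcal{B}\oprec \mathcal{T}$, which is precisely the identity already exploited in the coassociativity computation for $\Gamma$ earlier in this section; I would either quote it from \cite{acg10} or re-derive it directly from the definition of the quotient quasi-order.
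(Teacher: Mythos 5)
Your proposal is correct and follows essentially the same route as the paper's own proof: unfold both bracketings, use the transitivity identity $(\mathcal{T}/\mathcal{A})/(\mathcal{B}/\mathcal{A})=\mathcal{T}/\mathcal{B}$ to match the non-vanishing conditions, and invoke the injectivity of the sandwich bijection of Lemma~\ref{lemme-sandwich} to identify the two resulting second components. The only cosmetic difference is that the paper concludes $\mathcal{V}=\mathcal{Z}$ by cancelling a single quotient by $\mathcal{U}$, whereas you cancel by $\mathcal{T}'_2$ and then by $\mathcal{T}'_1$; both reduce to the same two ingredients.
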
 
\begin{proof}
	Let $(\mathcal{T}_1,\mathcal{T}^{\prime}_1), (\mathcal{T}_2,\mathcal{T}^{\prime}_2)$ and $(\mathcal{T}_3,\mathcal{T}^{\prime}_3)$ be three elements of $\widetilde{\mathbb{D}}_X$, i.e., $\mathcal{T}^{\prime}_1\oprec \mathcal{T}_1$, $\mathcal{T}^{\prime}_2\oprec \mathcal{T}_2$ and $\mathcal{T}^{\prime}_3\oprec \mathcal{T}_3$.
	We suppose first that $\mathcal{T}_2=\mathcal{T}_{1}/\mathcal{T}^{\prime}_1$, otherwise the result is zero.
	\begin{align*}
	\big( (\mathcal{T}_1,\mathcal{T}^{\prime}_1)\divideontimes (\mathcal{T}_2,\mathcal{T}^{\prime}_2) \big) \divideontimes (\mathcal{T}_3,\mathcal{T}^{\prime}_3)&=(\mathcal{T}_1,\mathcal{U})\divideontimes (\mathcal{T}_3,\mathcal{T}^{\prime}_3),
	\end{align*}
	 where $\mathcal{U}$ is defined by $\mathcal{T}^{\prime}_2=\mathcal{U}/\mathcal{T}^{\prime}_1$, then\\
	 \begin{align*}
	\big( (\mathcal{T}_1,\mathcal{T}^{\prime}_1)\divideontimes (\mathcal{T}_2,\mathcal{T}^{\prime}_2) \big) \divideontimes (\mathcal{T}_3,\mathcal{T}^{\prime}_3)&=(\mathcal{T}_1, \mathcal{V})
	\end{align*}
	 where $\mathcal{V}$ and $\mathcal{U}$ are defined by $\mathcal{T}^{\prime}_3=\mathcal{V}/\mathcal{U}$, and $\mathcal{T}^{\prime}_2=\mathcal{U}/\mathcal{T}^{\prime}_1$. Then,\\
	\[\big( (\mathcal{T}_1,\mathcal{T}^{\prime}_1)\divideontimes (\mathcal{T}_2,\mathcal{T}^{\prime}_2) \big) \divideontimes (\mathcal{T}_3,\mathcal{T}^{\prime}_3) \longmapsto \begin{cases}(\mathcal{T}_1,\mathcal{V}) & \text{if $\mathcal{T}_2=\mathcal{T}_1/\mathcal{T}^{\prime}_1$ and $\mathcal{T}_3=\mathcal{T}_1/\mathcal{U}$} \\
	0 & \text{if not.} \end{cases}\]
	Where $\mathcal{T}^{\prime}_2=\mathcal{U}/\mathcal{T}^{\prime}_1$, and $\mathcal{T}^{\prime}_3=\mathcal{V}/\mathcal{U}$.\\ 
	
	On the other hand, for $\mathcal{T}_3=\mathcal{T}_2/\mathcal{T}^{\prime}_2$, we have
	\begin{align*}
	(\mathcal{T}_1,\mathcal{T}^{\prime}_1)\divideontimes \big( (\mathcal{T}_2,\mathcal{T}^{\prime}_2)\divideontimes (\mathcal{T}_3,\mathcal{T}^{\prime}_3)\big) &=(\mathcal{T}_1,\mathcal{T}^{\prime}_1)\divideontimes (\mathcal{T}_2, \mathcal{W}),
	\end{align*}
	where $\mathcal{T}^{\prime}_3=\mathcal{W}/\mathcal{T}^{\prime}_2$, where $\mathcal{T}_3=\mathcal{T}_2/\mathcal{T}^{\prime}_3$, and $\mathcal{T}_2=\mathcal{T}_1/\mathcal{T}^{\prime}_2$. Then
	$\mathcal{T}_3=\mathcal{T}_2/\mathcal{T}^{\prime}_3=\mathcal{T}_1/\mathcal{W}$, and $\mathcal{W}=\mathcal{Z}/\mathcal{T}^{\prime}_1$. Hence
	\[(\mathcal{T}_1,\mathcal{T}^{\prime}_1)\divideontimes \big((\mathcal{T}_2,\mathcal{T}^{\prime}_2)\divideontimes (\mathcal{T}_3,\mathcal{T}^{\prime}_3)\big) = \begin{cases}(\mathcal{T}_1,\mathcal{Z}) & \text{if $\mathcal{T}_3=\mathcal{T}_1/(\mathcal{T}^{\prime}_2\sqcup \mathcal{T}^{\prime}_3)$ and $\mathcal{T}_2=\mathcal{T}_1/\mathcal{T}^{\prime}_2$} \\
	0 & \text{if not.} \end{cases}\]
	Where $\mathcal{Z}$ is defined by $\mathcal{W}=\mathcal{Z}/\mathcal{T}^{\prime}_1$, and $\mathcal{T}^{\prime}_3=\mathcal{W}/\mathcal{T}^{\prime}_2$. It remains to show that $\mathcal{V}=\mathcal{Z}$: We have
	$$\mathcal{V}/\mathcal{U}=\mathcal{T}^{\prime}_3=\mathcal{W}/\mathcal{T}^{\prime}_2=(\mathcal{Z}/\mathcal{T}^{\prime}_1)/(\mathcal{U}/\mathcal{T}^{\prime}_1)=\mathcal{Z}/\mathcal{U}.$$
	Moreover, $\mathcal{T}_3=\mathcal{T}_2/\mathcal{T}^{\prime}_2=(\mathcal{T}_1/\mathcal{T}^{\prime}_1)/(\mathcal{U}/\mathcal{T}^{\prime}_1)=\mathcal{T}_1/\mathcal{U}$. We therefore conclude that for all $(\mathcal{T}_1,\mathcal{T}^{\prime}_1)$, $(\mathcal{T}_2,\mathcal{T}^{\prime}_2)$ and $(\mathcal{T}_3,\mathcal{T}^{\prime}_3)$ in $\widetilde{\mathbb{D}}_X$, we have
	$$\big( (\mathcal{T}_1,\mathcal{T}^{\prime}_1)\divideontimes (\mathcal{T}_2,\mathcal{T}^{\prime}_2)\big) \divideontimes (\mathcal{T}_3,\mathcal{T}^{\prime}_3)=(\mathcal{T}_1,\mathcal{T}^{\prime}_1)\divideontimes \big((\mathcal{T}_2,\mathcal{T}^{\prime}_2)\divideontimes (\mathcal{T}_3,\mathcal{T}^{\prime}_3)\big),$$
	which proves the associativity of the product $\divideontimes $.
\end{proof}
\section{Other relations between both doubling species $\mathbb{D}$ and $\widetilde{\mathbb{D}}$}\label{sect:comp}
\subsection{An action of $\widetilde{\mathbb D}$ on $\mathbb D$}
\begin{definition}
For any finite set $X$, let
	 $\Psi : \widetilde{\mathbb{D}}_X \otimes \mathbb{D}_X \longrightarrow \mathbb{D}_X$ be the map defined by:
	\[\Psi \big( (\mathcal{T},\mathcal{T}^{\prime})\otimes (\mathcal{U},Y)\big)  = \begin{cases}(\mathcal{T}, Y) & \text{if $\mathcal{U}=\mathcal{T}/\mathcal{T}^{\prime}$ } \\
	0 & \text{if not.} \end{cases}\]
\end{definition}
\begin{proposition}
	$\Psi $  is an action of $\widetilde{\mathbb{D}}$ on $\mathbb{D}$.
\end{proposition}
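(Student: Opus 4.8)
The action is to be understood with respect to the associative product $\divideontimes$ on $\widetilde{\mathbb D}_X$ defined above: concretely, I must check that $\mathbb D_X$ is a module over the associative algebra $(\widetilde{\mathbb D}_X,\divideontimes)$ via $\Psi$, i.e. that
$$\Psi\circ(\divideontimes\otimes\id)=\Psi\circ(\id\otimes\Psi)$$
as maps $\widetilde{\mathbb D}_X\otimes\widetilde{\mathbb D}_X\otimes\mathbb D_X\to\mathbb D_X$. The plan is to evaluate both composites on an arbitrary basis element $(\mathcal T_1,\mathcal T_1')\otimes(\mathcal T_2,\mathcal T_2')\otimes(\mathcal U,Y)$ and to compare the conditions under which each is nonzero, both outputs being forced to be $(\mathcal T_1,Y)$.

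For the left-hand side I would first apply $\divideontimes$: by definition $(\mathcal T_1,\mathcal T_1')\divideontimes(\mathcal T_2,\mathcal T_2')$ vanishes unless $\mathcal T_2=\mathcal T_1/\mathcal T_1'$, in which case it equals $(\mathcal T_1,\mathcal W)$, where $\mathcal W$ is the topology determined by $\mathcal T_2'=\mathcal W/\mathcal T_1'$. Here Lemma \ref{lemme-sandwich} guarantees both the existence and uniqueness of $\mathcal W$ and the sandwich $\mathcal T_1'\oprec\mathcal W\oprec\mathcal T_1$, so that $(\mathcal T_1,\mathcal W)\in\widetilde{\mathbb D}_X$. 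Applying $\Psi$ then returns $(\mathcal T_1,Y)$ precisely when $\mathcal U=\mathcal T_1/\mathcal W$, and $0$ otherwise.

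For the right-hand side I would first apply $\Psi$ to the last two tensor factors: this yields $(\mathcal T_2,Y)$ when $\mathcal U=\mathcal T_2/\mathcal T_2'$ and $0$ otherwise, and a further application of $\Psi$ then produces $(\mathcal T_1,Y)$ exactly when $\mathcal T_2=\mathcal T_1/\mathcal T_1'$. It remains only to see that the two nonvanishing conditions coincide. Both require $\mathcal T_2=\mathcal T_1/\mathcal T_1'$; granting this, the left-hand condition reads $\mathcal U=\mathcal T_1/\mathcal W$ with $\mathcal T_2'=\mathcal W/\mathcal T_1'$, while the right-hand one reads $\mathcal U=\mathcal T_2/\mathcal T_2'=(\mathcal T_1/\mathcal T_1')/(\mathcal W/\mathcal T_1')$. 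These coincide by the iterated-quotient identity
$$(\mathcal T_1/\mathcal T_1')/(\mathcal W/\mathcal T_1')=\mathcal T_1/\mathcal W,\qquad \mathcal T_1'\oprec\mathcal W\oprec\mathcal T_1,$$
which is exactly the computation already used in the associativity proof of $\divideontimes$ and ultimately rests on Lemma \ref{lemme-sandwich}. Hence the two composites agree on every basis element.

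I expect the only real subtlety to be the bookkeeping of the auxiliary topology $\mathcal W$ and the verification that the bijection $\mathcal W\leftrightarrow\mathcal T_2'$ of Lemma \ref{lemme-sandwich} matches the two nonvanishing conditions; once the iterated-quotient identity is invoked, this is immediate. No separate unit axiom needs to be checked, since $\divideontimes$ is treated as a (not necessarily unital) associative product, so that "action" here means precisely the module compatibility displayed above.
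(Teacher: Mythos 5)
Your proof is correct and follows essentially the same route as the paper: evaluate both composites $\Psi\circ(\divideontimes\otimes\mathrm{id})$ and $\Psi\circ(\mathrm{id}\otimes\Psi)$ on a basis element, observe that both can only produce $(\mathcal T_1,Y)$, and match the nonvanishing conditions via the iterated-quotient identity $(\mathcal T_1/\mathcal T_1')/(\mathcal W/\mathcal T_1')=\mathcal T_1/\mathcal W$ guaranteed by Lemma \ref{lemme-sandwich}. The only difference is notational (your $\mathcal W$ is the paper's $\mathcal V$), and your explicit remark that no unit axiom is needed is a harmless clarification.
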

\begin{proof}
	We have to verify the commutativity of this diagram:
	$$
	\xymatrix{
		\widetilde{\mathbb{D}}_X \otimes \widetilde{\mathbb{D}}_X \otimes \mathbb{D}_X \ar[rr]^{id\otimes \Psi } \ar[d]_{\divideontimes \otimes id } && \widetilde{\mathbb{D}}_X\otimes \mathbb{D}_X\ \ar[d]^{\Psi }\\
		\widetilde{\mathbb{D}}_X\otimes \mathbb{D}_X \ar[rr]_{\Psi } && \mathbb{D}_X
	}
	$$
	Let $(\mathcal{T}_1,\mathcal{T}^{\prime}_1)$ and $(\mathcal{T}_2,\mathcal{T}^{\prime}_2)$ be two elements of $\widetilde{\mathbb{D}}_X$ , and $(\mathcal{U},Y)\in \mathbb{D}_X$. We have
	\[(id\otimes \Psi)\big( (\mathcal{T}_1,\mathcal{T}^{\prime}_1)\otimes (\mathcal{T}_2,\mathcal{T}^{\prime}_2)\otimes (\mathcal{U},Y)\big)  = \begin{cases}(\mathcal{T}_1, \mathcal{T}^{\prime}_1)\otimes (\mathcal{T}_2,Y) & \text{if $\mathcal{U}=\mathcal{T}_2/\mathcal{T}^{\prime}_2$ } \\
	0 & \text{if not.} \end{cases}\]
	Then,
	\[\Psi \circ (id\otimes \Psi)\big( (\mathcal{T}_1,\mathcal{T}^{\prime}_1)\otimes (\mathcal{T}_2,\mathcal{T}^{\prime}_2)\otimes (\mathcal{U},Y)\big)  = \begin{cases}(\mathcal{T}_1, Y) & \text{if $\mathcal{U}=\mathcal{T}_2/\mathcal{T}^{\prime}_2$ and $\mathcal{T}_2=\mathcal{T}_1/\mathcal{T}^{\prime}_1$ } \\
	0 & \text{if not.} \end{cases}\]
	On the other hand, we have
	\[(\divideontimes \otimes id)\big( (\mathcal{T}_1,\mathcal{T}^{\prime}_1)\otimes (\mathcal{T}_2,\mathcal{T}^{\prime}_2)\otimes (\mathcal{U},Y)\big)  = \begin{cases}(\mathcal{T}_1, \mathcal{V})\otimes (\mathcal{U},Y) & \text{if $\mathcal{T}_2=\mathcal{T}_1/\mathcal{T}^{\prime}_1$ } \\
	0 & \text{if not.} \end{cases}\]
	where $\mathcal{T}^{\prime}_2=\mathcal{V}/\mathcal{T}^{\prime}_1 $. Then,
	\[\Psi \circ (\divideontimes \otimes id)\big( (\mathcal{T}_1,\mathcal{T}^{\prime}_1)\otimes (\mathcal{T}_2,\mathcal{T}^{\prime}_2)\otimes (\mathcal{U},Y)\big)  = \begin{cases}(\mathcal{T}_1, Y) & \text{if $\mathcal{T}_2=\mathcal{T}_1/\mathcal{T}^{\prime}_1$ and $\mathcal{U}=\mathcal{T}_1/\mathcal{V}$ } \\
	0 & \text{if not.} \end{cases}\]
	Moreover, $\mathcal{U}=\mathcal{T}_2/\mathcal{T}^{\prime}_2=(\mathcal{T}_1/\mathcal{T}^{\prime}_1)/(\mathcal{V}/\mathcal{T}^{\prime}_1)=\mathcal{T}_1/\mathcal{V}$. We conclude then
	$$\Psi \circ (\divideontimes \otimes id)=\Psi \circ (id\otimes \Psi).$$
	which proves that $\Psi$ is an action of $\widetilde{\mathbb{D}}$ on $\mathbb{D}$.
\end{proof}
\subsection{The cointeraction}
	\begin{theorem}\label{cointeraction}
	For any finite set $X$, let
	 $\xi : \widetilde{\mathbb{D}}_X \otimes (\mathbb{D}\otimes \mathbb{D})_X \longrightarrow \widetilde{\mathbb{D}}_X \otimes (\mathbb{D}\otimes \mathbb{D})_X$ be the map defined by:
	 \begin{equation}\label{eq:xi}
	 \xi\big((\mathcal{T},\widetilde{\mathcal{T}})\otimes (\mathcal{T}_1,Z)\otimes (\mathcal{T}_2,W)  \big)=(\mathcal{T}\restr{Z}\mathcal{T}\restr{X\backslash Z},\widetilde{\mathcal{T}}\restr{Z}\widetilde{\mathcal{T}}\restr{X\backslash Z})\otimes (\mathcal{T}_1,Z)\otimes (\mathcal{T}_2,W).
	 \end{equation}
	The following diagram is commutative:
$$
\xymatrix{
\mathbb{D}_X \ar[rr]^\Phi \ar[d]_{\Delta} && \widetilde{\mathbb{D}}_X \otimes \mathbb{D}_X \ar[d]^{Id \otimes \Delta}\\
	(\mathbb{D}\otimes\mathbb{D})_X \ar[d]_{\Phi \otimes \Phi } && \widetilde{\mathbb{D}}_X \otimes (\mathbb{D} \otimes \mathbb{D})_X \ar[d]^{\xi}\\
\bigoplus \limits_{Y\subset X}\widetilde{\mathbb{D}}_Y \otimes \mathbb{D}_Y \otimes \widetilde{\mathbb{D}}_{X\backslash Y} \otimes \mathbb{D}_{X\backslash Y} \ar[rr]_{ m^{1,3} } && \widetilde{\mathbb{D}}_X \otimes (\mathbb{D} \otimes \mathbb{D})_X
}
$$
i.e.,
$$\xi \circ(id\otimes \Delta )\circ \Phi =(m^{1,3}) \circ(\Phi \otimes \Phi )\circ \Delta.$$
\end{theorem}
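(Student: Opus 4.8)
The plan is to evaluate both composites on an arbitrary generator $(\mathcal{T},Y)\in\mathbb{D}_X$ and to exhibit a bijection between the two indexing sets under which the summands agree termwise. Along the left-hand side, $\Phi$ produces the pairs $(\mathcal{T},\mathcal{T}')$ subject to $\mathcal{T}'\soprec\mathcal{T}$, $Y\in\mathcal{T}/\mathcal{T}'$ and $\mathcal{T}'\restr{X\backslash Y}=D_{X\backslash Y,\mathcal{T}'}$; then $\id\otimes\Delta$ introduces an open set $Z\in(\mathcal{T}/\mathcal{T}')\restr{Y}$, and finally $\xi$ yields the term
$$\bigl(\mathcal{T}\restr{Z}\mathcal{T}\restr{X\backslash Z},\,\mathcal{T}'\restr{Z}\mathcal{T}'\restr{X\backslash Z}\bigr)\otimes\bigl((\mathcal{T}/\mathcal{T}')\restr{Z},Z\bigr)\otimes\bigl((\mathcal{T}/\mathcal{T}')\restr{X\backslash Z},Y\backslash Z\bigr).$$
Along the right-hand side, $\Delta$ produces $Z\in\mathcal{T}\restr{Y}$, the two copies of $\Phi$ produce $\mathcal{S}'\soprec\mathcal{T}\restr{Z}$ and $\mathcal{S}''\soprec\mathcal{T}\restr{X\backslash Z}$ (with their openness conditions), and $m^{1,3}$ merges the first and third tensor slots into
$$\bigl(\mathcal{T}\restr{Z}\mathcal{T}\restr{X\backslash Z},\,\mathcal{S}'\mathcal{S}''\bigr)\otimes\bigl((\mathcal{T}\restr{Z})/\mathcal{S}',Z\bigr)\otimes\bigl((\mathcal{T}\restr{X\backslash Z})/\mathcal{S}'',Y\backslash Z\bigr).$$
The correspondence to aim for is $\mathcal{S}'=\mathcal{T}'\restr{Z}$, $\mathcal{S}''=\mathcal{T}'\restr{X\backslash Z}$, so that in particular $\mathcal{T}'=\mathcal{S}'\mathcal{S}''$.

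The decisive structural observation, which I would establish first, is that under the conditions imposed by $\Phi$ the set $Z$ is clopen for $\mathcal{T}'$. Indeed $Z$ is open for $\mathcal{T}/\mathcal{T}'$ (being $O\cap Y$ with $O,Y\in\mathcal{T}/\mathcal{T}'$), hence open for the finer topology $\mathcal{T}$, hence open for the still finer $\mathcal{T}'$. Writing $\mathcal{R}$ for the relation $x\mathcal{R}y\Leftrightarrow(x\le_{\mathcal{T}}y\text{ or }y\le_{\mathcal{T}'}x)$ whose transitive closure is $\le_{\mathcal{T}/\mathcal{T}'}$, openness of $Z$ for $\mathcal{T}/\mathcal{T}'$ says exactly that $Z$ is an $\mathcal{R}$-upper set, which forces $X\backslash Z$ to be open for $\mathcal{T}'$ as well. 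Therefore $\mathcal{T}'=\mathcal{T}'\restr{Z}\,\mathcal{T}'\restr{X\backslash Z}$ is the disjoint-union topology, so $(\mathcal{T}',Z)\mapsto(Z,\mathcal{T}'\restr{Z},\mathcal{T}'\restr{X\backslash Z})$ loses no information and the first tensor factors on the two sides coincide.

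To match the remaining two factors I would prove the restriction–quotient compatibility $(\mathcal{T}/\mathcal{T}')\restr{W}=(\mathcal{T}\restr{W})/(\mathcal{T}'\restr{W})$ for $W=Z$ and $W=X\backslash Z$. Since $Z$ is $\mathcal{R}$-upper, any $\mathcal{R}$-chain issued from a point of $Z$ stays in $Z$, and dually any $\mathcal{R}$-chain ending in $X\backslash Z$ lies entirely in $X\backslash Z$; hence for these two subsets the transitive closure of $\mathcal{R}\restr{W}$ equals the restriction to $W$ of the transitive closure of $\mathcal{R}$, which is the asserted equality. This identifies $(\mathcal{T}/\mathcal{T}')\restr{Z}$ with $(\mathcal{T}\restr{Z})/\mathcal{S}'$ and $(\mathcal{T}/\mathcal{T}')\restr{X\backslash Z}$ with $(\mathcal{T}\restr{X\backslash Z})/\mathcal{S}''$, so all three tensor factors agree.

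Finally I would check that the two systems of conditions correspond. In the forward direction Proposition \ref{restriction} gives $\mathcal{T}'\restr{Z}\oprec\mathcal{T}\restr{Z}$ and $\mathcal{T}'\restr{X\backslash Z}\oprec\mathcal{T}\restr{X\backslash Z}$; the compatibility above turns $Y\in\mathcal{T}/\mathcal{T}'$ into $Z\in\mathcal{T}\restr{Y}$ together with $Y\backslash Z\in(\mathcal{T}\restr{X\backslash Z})/\mathcal{S}''$; and, using $X\backslash Y\subset X\backslash Z$, the condition $\mathcal{T}'\restr{X\backslash Y}=D_{X\backslash Y,\mathcal{T}'}$ restricts to $\mathcal{S}''\restr{X\backslash Y}=D_{X\backslash Y,\mathcal{S}''}$. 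In the reverse direction, given admissible data $(Z,\mathcal{S}',\mathcal{S}'')$ I set $\mathcal{T}'=\mathcal{S}'\mathcal{S}''$; then $\mathcal{T}'\prec\mathcal{T}$ is immediate (a $\mathcal{T}$-open set restricts to a $\mathcal{T}\restr{Z}$- and $\mathcal{T}\restr{X\backslash Z}$-open set, hence to $\mathcal{S}'$- and $\mathcal{S}''$-open sets), while $Z$ is clopen for $\mathcal{T}'$ and open for $\mathcal{T}$, so again $\mathcal{R}$-upper, which both recovers $Y\in\mathcal{T}/\mathcal{T}'$ and localises the two remaining admissibility axioms to the blocks $Z$ and $X\backslash Z$. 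I expect the gluing implicit in this last step to be the main obstacle: one must verify that componentwise admissibility of $\mathcal{S}'$ and $\mathcal{S}''$ across the clopen partition induced by $Z$ reassembles into the single relation $\mathcal{T}'\oprec\mathcal{T}$, the point being that since both $\sim_{\mathcal{T}/\mathcal{T}'}$ and $\sim_{\mathcal{T}'/\mathcal{T}'}$ refine the partition $\{Z,X\backslash Z\}$, the connected-component axiom and the equivalence axiom may be checked separately on each block. The two assignments being mutually inverse, the indexing sets coincide and the summands match termwise, yielding $\xi\circ(\id\otimes\Delta)\circ\Phi=m^{1,3}\circ(\Phi\otimes\Phi)\circ\Delta$.
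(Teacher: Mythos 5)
Your proposal follows essentially the same route as the paper's proof: both composites are evaluated on a generator $(\mathcal{T},Y)$ and matched termwise via the bijection $(Z,\mathcal{S}',\mathcal{S}'')\leftrightarrow(Z,\mathcal{T}'=\mathcal{S}'\mathcal{S}'')$, your ``decisive structural observation'' that $Z$ is clopen for $\mathcal{T}'$ being exactly the paper's Lemma \ref{lem:equivalence} and your final gluing step being its Lemma \ref{T-split}. The only difference is one of emphasis: you make explicit the restriction--quotient compatibility $(\mathcal{T}/\mathcal{T}')\restr{W}=(\mathcal{T}\restr{W})/(\mathcal{T}'\restr{W})$ on the two blocks, which the paper leaves implicit, while being slightly sketchier on the reverse-direction gluing that the paper isolates as a separate lemma.
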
 
\begin{proof}
In \eqref{eq:xi}, the finite set $X$ is partitioned into two subsets $X_1$ and $X_2$, and $Z$ (resp. $W$) is an open subset of $X_1$ (resp. $X_2$) for $\mathcal T_1$ (resp. $\mathcal T_2$). According to Proposition \ref{restriction}, the relation $\mathcal{T}\restr{Z}\mathcal{T}\restr{X\backslash Z}\oprec\mathcal{T}^{\prime}\restr{Z}\mathcal{T}^{\prime}\restr{X\backslash Z}$ holds, hence the map $\xi$ is well defined. For any $(\mathcal{T},Y)\in \mathbb{D}_X$, we have
	\begin{align*}
	(id\otimes \Delta )\circ \Phi (\mathcal{T},Y)&=(id\otimes \Delta )\left(\sum \limits_{\underset{\scriptstyle\widetilde{\mathcal{T}}\srestr{X\backslash Y}=D_{X\backslash Y,\widetilde{\mathcal{T}}} }{\widetilde{\mathcal{T}}\soprec  \mathcal{T},\,Y\in \mathcal{T}/ \widetilde{\mathcal{T}}}}(\mathcal{T},\widetilde{\mathcal{T}})\otimes (\mathcal{T}/ \widetilde{\mathcal{T}},Y)\right)\\
	&=\sum \limits_{\underset{\scriptstyle\widetilde{\mathcal{T}}\srestr{X\backslash Y}=D_{X\backslash Y,\widetilde{\mathcal{T}}} }{\widetilde{\mathcal{T}}\soprec  \mathcal{T},\hskip1mm Y\in \mathcal{T}/ \widetilde{\mathcal{T}}}}\ \sum\limits_{\underset{}{Z\in (\mathcal{T}/ \widetilde{\mathcal{T}})\srestr{Y}}}(\mathcal{T},\widetilde{\mathcal{T}})\otimes \big((\mathcal{T}/ \widetilde{\mathcal{T}})\restr{Z},Z\big)\otimes \big((\mathcal{T}/ \widetilde{\mathcal{T}})\restr{X\backslash Z},Y\backslash Z \big)
	\end{align*}
	therefore
		\begin{eqnarray*}
	\xi\circ (id\otimes \Delta )\circ \Phi (\mathcal{T},Y)=\sum \limits_{\underset{\scriptstyle\widetilde{\mathcal{T}}\srestr{X\backslash Y}=D_{X\backslash Y,\widetilde{\mathcal{T}}} }{\widetilde{\mathcal{T}}\soprec  \mathcal{T},\hskip1mm Y\in \mathcal{T}/ \widetilde{\mathcal{T}}}}\ \sum\limits_{\underset{}{Z\in (\mathcal{T}/ \widetilde{\mathcal{T}})\srestr{Y}}}(\mathcal{T}\restr{Z}\mathcal{T}\restr{X\backslash Z},\widetilde{\mathcal{T}}\restr{Z}\widetilde{\mathcal{T}}\restr{X\backslash Z})\otimes \big((\mathcal{T}/ \widetilde{\mathcal{T}})\restr{Z},Z\big)\otimes \big((\mathcal{T}/ \widetilde{\mathcal{T}})\restr{X\backslash Z},Y\backslash Z \big).
		\end{eqnarray*}
On the other hand, we have
\begin{eqnarray*}
(\Phi \otimes \Phi )\circ \Delta (\mathcal{T},Y)\\
	&\hskip -15mm =\displaystyle\sum_{Z\in \mathcal{T}\srestr{Y}}\ \sum_{\mathcal{T}^{\prime}\soprec \mathcal{T}\srestr{Z}}\ \sum \limits_{\underset{ \scriptstyle \mathcal{T}^{\prime \prime}\srestr{X\backslash Y}=D_{X\backslash Y,\mathcal{T}^{\prime \prime}}}{\mathcal{T}^{\prime \prime}\soprec \mathcal{T}\srestr{X\backslash Z},\hskip1mmY\backslash Z\in \mathcal{T}\srestr{X\backslash Z}/ \mathcal{T}^{\prime \prime }}}(\mathcal{T}\restr{Z},\mathcal{T}^{\prime})\otimes (\mathcal{T}\restr{Z}/ \mathcal{T}^{\prime}, Z)\otimes (\mathcal{T}\restr{X\backslash Z}, \mathcal{T}^{\prime \prime})\otimes (\mathcal{T}\restr{X\backslash Z}/\mathcal{T}^{\prime \prime}, Y\backslash Z),
\end{eqnarray*}
therefore
\begin{eqnarray*}
	m^{1,3}\circ (\Phi \otimes \Phi )\circ \Delta (\mathcal{T},Y)\\
	&\hskip -10mm =\displaystyle\sum_{Z\in \mathcal{T}\srestr{Y}}\ \sum_{\mathcal{T}^{\prime}\soprec \mathcal{T}\srestr{Z}}\ \sum \limits_{\underset{ \scriptstyle \mathcal{T}^{\prime \prime}\srestr{X\backslash Y}=D_{X\backslash Y,\mathcal{T}^{\prime \prime}}}{\mathcal{T}^{\prime \prime}\soprec \mathcal{T}\srestr{X\backslash Z},\hskip1mmY\backslash Z\in \mathcal{T}\srestr{X\backslash Z}/ \mathcal{T}^{\prime \prime }}}(\mathcal{T}\restr{Z}\mathcal{T}\restr{X\backslash Z},\mathcal{T}^{\prime}\mathcal{T}^{\prime \prime})\otimes (\mathcal{T}/ \mathcal{T}^{\prime}, Z)\otimes (\mathcal{T}\restr{X\backslash Z}/\mathcal{T}^{\prime \prime}, Y\backslash Z).
\end{eqnarray*}
	\ignore{The map $(Z, \mathcal{T}^{\prime }, \mathcal{T}^{\prime \prime} )\mapsto (Z, \mathcal{T}^{\prime }\mathcal{T}^{\prime \prime})$ is
	a bijection from the set of triples $(Z, \mathcal{T}^{\prime }, \mathcal{T}^{\prime \prime} )$ with $ Z\in \mathcal{T}\restr{Y}$ and $\mathcal{T}^{\prime \prime}\oprec \mathcal{T}\restr{X\backslash Z},  \mathcal{T}^{\prime \prime}\restr{X\backslash Y}=D_{X\backslash Y,\mathcal{T}^{\prime \prime}},Y\backslash Z\in \mathcal{T}\restr{X\backslash Z}/ \mathcal{T}^{\prime \prime },\mathcal{T}^{\prime}\oprec \mathcal{T}\restr{Z}$
	, onto
	the set of pairs $(Z,\widetilde{\mathcal{T}})$ with $\widetilde{\mathcal{T}}\oprec \mathcal{T},  \widetilde{\mathcal{T}}\restr{X\backslash Y}=D_{X\backslash Y,\widetilde{\mathcal{T}}}$ and $Z\in (\mathcal{T}/ \widetilde{\mathcal{T}})\restr{Y} $. The inverse map is given by $(Z,\widetilde{\mathcal{T}})\mapsto (Z, \widetilde{\mathcal{T}}\restr{Z},\widetilde{\mathcal{T}}\restr{X\backslash Z})$. }To show that the both expressions above coincide, we use the two following lemmas.
\begin{lemma}\label{lem:equivalence}
		Let $\mathcal{T}$ and $\widetilde{\mathcal{T}}$ be two topologies on $X$, such that $\widetilde{\mathcal{T}} \oprec \mathcal{T}$ and let $Y\in \mathcal{T}$. Then $Y\in \mathcal{T}/ \widetilde{\mathcal{T}}$ if and only if both $Y$ and $X\backslash Y$ are open subsets of $X$ for $\widetilde{\mathcal{T}}$.
	\end{lemma}
\begin{proof}
From $Y\in \mathcal{T}$ and $\widetilde{\mathcal{T}} \oprec \mathcal{T}$ we immediately get $Y\in \widetilde{\mathcal{T}}$. Now let $x\in X\backslash Y$ and $y\in X$ with $x\leq_{\widetilde{\mathcal{T}}} y$. From $x\leq_{\widetilde{\mathcal{T}}} y$  we get $y\leq_{\mathcal{T}/ \widetilde{\mathcal{T}}} x$. Suppose that $y\in Y$ then $x\in Y$ (because $Y\in \mathcal{T}/ \widetilde{\mathcal{T}}$), which is absurd. Hence $X\backslash Y\in \widetilde{\mathcal{T}}$.\\

Conversely, suppose that both $Y$ and $X\backslash Y$ are open subsets of $X$ for $\widetilde{\mathcal{T}}$, let $y\in Y$ and let $z\in X$ with $y\le_{\mathcal T/\widetilde{\mathcal{T}}}z$. There is a chain
$$yRt_1\cdots Rt_kRz$$
with $t_1,\ldots,t_k\in X$, where $aRb$ means $a\le_{\mathcal T}b$ or $a\ge_{\widetilde{\mathcal T}}b$. Supposing $a\in Y$ we have either $a\le_{\mathcal T}b$ which yields $b\in Y$, or $b\le_{\widetilde{\mathcal T}}a$, which would yield the contradiction $a\in X\backslash Y$ if $b$ were to belong to $X\backslash Y$. Hence we necessarily have $b\in Y$. Progressing along the chain, from $y\in Y$ we therefore infer $z\in Y$.
\end{proof}
\begin{lemma}\label{T-split}
Let $\mathcal T$ be a topology on a finite set $X$, and let $Z$ be an open subset of $X$ for $T$. Let $\mathcal U$ be the topology $\mathcal T\restr{Z}\mathcal T\restr{X\backslash Z}$. Then we have
$$\mathcal U\oprec\mathcal T.$$
\end{lemma}
\begin{proof}
The relation $\mathcal U\prec\mathcal T$ is obvious. Any connected component $W$ for $\mathcal U$ is contained either in $Z$ or in $X\backslash Z$, hence $\mathcal T\restr W=\mathcal U\restr W$. Finally, consider $x,y\in X$ such that $x\sim_{\mathcal T/\mathcal U} y$. There is a chain $xRt_1R\cdots Rt_kRx$ with some $j\in\{1,\ldots,k\}$ such that $t_j=y$. By the argument which was used in the end of the proof of Proposition \ref{restriction}, the whole chain belongs to the same $\mathcal U$-connected component, hence we have $x\widetilde Rt_1\widetilde R\cdots \widetilde Rt_k\widetilde Rx$, where $a\widetilde Rb$ stands for $a\le_{\mathcal U} b$ or $b\le_{\mathcal U} a$. Hence $x\sim_{\mathcal U/\mathcal U}y$.
\end{proof}
\noindent {\it Proof of theorem \ref{cointeraction} (continued).}
	Let $\mathcal E$ be the set of triples $(Z,\mathcal T^{\prime},\mathcal T^{\prime\prime})$ which occur in the expression of $m^{1,3}\circ (\Phi \otimes \Phi )\circ \Delta (\mathcal{T},Y)$ above, i.e. subject to the conditions
$$Z\in \mathcal{T}\restr{Y}, \hskip 5mm \mathcal{T}^{\prime \prime}\oprec \mathcal{T}\restr{X\backslash Z},\hskip 5mm \mathcal{T}^{\prime}\oprec \mathcal{T}\restr{Z}, \hskip 5mm  
	\mathcal{T}^{\prime \prime}\restr{X\backslash Y}=D_{X\backslash Y,\mathcal{T}^{\prime \prime}},\hskip 5mm Y\backslash Z\in \mathcal{T}\restr{X\backslash Z}/ \mathcal{T}^{\prime \prime },$$
	and let $\mathcal F$ be the set of pairs $(Z,\widetilde{\mathcal T})$ which occur in the expression of $\xi\circ (id\otimes \Delta )\circ \Phi (\mathcal{T},Y)$ above, i.e. subject to the conditions
$$
\widetilde{\mathcal T}\oprec\mathcal T,\hskip 3mm Y\in\mathcal T/\widetilde{\mathcal T},\hskip 3mm
\widetilde{\mathcal T}\restr{X\backslash Y}=D_{X\backslash Y,\widetilde{\mathcal T}},\hskip 3mm Z\in(\mathcal T/\widetilde{\mathcal T})\restr Y.
$$

To prove Theorem \ref{cointeraction}, it suffices to show that $(Z,\mathcal T^{\prime},\mathcal T^{\prime\prime})\mapsto (Z,\mathcal T^{\prime}\mathcal T^{\prime\prime})$ is a bijection from $\mathcal E$ onto $\mathcal F$. For any $(Z,\mathcal T^{\prime}\mathcal T^{\prime\prime})\in\mathcal E$, it is clear from Lemma \ref{T-split} that $\widetilde{\mathcal T}\oprec\mathcal T$ holds, with $\widetilde{\mathcal T}:=\mathcal T^{\prime}\mathcal T^{\prime\prime}$. From $Y\in\mathcal T$ we get $Y\backslash Z\in\mathcal T\restr{X\backslash Z}$. Together with $Y\backslash Z\in\mathcal T\restr{X\backslash Z}/\mathcal T^{\prime\prime}$ one deduces from Lemma \ref{lem:equivalence} that both $Y\backslash Z$ and $X\backslash Y$ are open subsets of $X\backslash Z$ for $\mathcal T^{\prime\prime}$. Hence we have a partition
\begin{equation}\label{split-three}
X=Z\sqcup Y\backslash Z\sqcup X\backslash Y
\end{equation}
of $X$ into three open subsets for the topology $\widetilde {\mathcal T}$. From $Y\in \widetilde{\mathcal T}$ and $X\backslash Y\in\widetilde{\mathcal T}$ we get $Y\in \mathcal T/\widetilde{\mathcal T}$ from Lemma \ref{lem:equivalence}. We also have
$$\widetilde{\mathcal T}\restr{X\backslash Y}={\mathcal T}^{\prime\prime}\restr{X\backslash Y}
=D_{X\backslash Y,{\mathcal T}^{\prime\prime}}=D_{X\backslash Y,\widetilde{\mathcal T}}.$$
Finally, from the fact that both $Z$ and $X\backslash Z$ are open for $\widetilde{\mathcal T}$ and $Z\in T$, we get $Z\in \mathcal T/\widetilde{\mathcal T}$ from Lemma \ref{lem:equivalence}, hence $Z\in (\mathcal T/\widetilde{\mathcal T})\restr Y$. This proves $(Z,\widetilde{\mathcal T})\in\mathcal F$.\\

Conversely, for any $(Z,\widetilde{\mathcal T})\in\mathcal F$, from $Y\in \mathcal T/\widetilde{\mathcal T}$ and Lemma \ref{lem:equivalence} we get that both $Y$ and $X\backslash Y$ are open subsets of $X$ for $\widetilde{\mathcal T}$, and from $Z\in\mathcal T/\widetilde{\mathcal T}$ and the same lemma we get that both $Z$ and $X\backslash Z$ are open subsets of $X$ for $\widetilde{\mathcal T}$. Hence the splitting \eqref{split-three} into three open subsets holds, and we have $\widetilde{\mathcal T}=\mathcal T^{\prime}\mathcal T^{\prime\prime}$, with $\mathcal T^{\prime}:=\widetilde{\mathcal T}\restr Z$ and $\mathcal T^{\prime\prime}:=\widetilde{\mathcal T}\restr {X\backslash Z}$. The triple $(Y,\mathcal T^{\prime},\mathcal T^{\prime\prime})$ verifies the five required conditions to belong to $\mathcal E$. Both correspondences from $\mathcal E$ to $\mathcal F$ are obvioulsly inverse to each other, which ends up the proof of Theorem \ref{cointeraction}.

\ignore{
	For $\widetilde{\mathcal{T}}=\mathcal{T}^{\prime}\restr{Z}\mathcal{T}^{\prime \prime}$, suppose that $Y \notin \mathcal{T}/\widetilde{\mathcal{T}}$, i.e,\\
	there exist $y\in Y$, and $w\in X\backslash Y$, such that 
	$y \leq_{\mathcal{T}/\widetilde{\mathcal{T}}} w$, i.e, there exist $a_1,...,a_p \in X$ such that  $y\mathcal{R}a_1...\mathcal{R}a_p\mathcal{R}w$,\\ 
	then there exist $i \in \{1,...,p\}$, such that, $a_i\in Y$, $a_{i+1}\in X\backslash Y$, and 
	($a_i \leqslant_{\mathcal{T}} a_{i+1}$ or  $a_{i+1} \leqslant_{\widetilde{\mathcal{T}}}    a_i$),
	we have $Y\in \mathcal{T}$, then $a_i\leqslant_{\mathcal{T}} a_{i+1}$ 
	is impossible,
	moreover we have $Y\backslash Z\in \mathcal{T}\restr{X\backslash Z}/ \mathcal{T}^{\prime \prime }$, after the lemma above we have  $X\backslash Y\in \mathcal{T}^{\prime \prime }$, then $X\backslash Y\in \widetilde{\mathcal{T}}$, then $a_{i+1} \leqslant_{\widetilde{\mathcal{T}}} a_i$ is impossible. then $Y\in \mathcal{T}/\widetilde{\mathcal{T}}$.\\
	Show that $Z\in (\mathcal{T}/\widetilde{\mathcal{T}})\restr{Y}$,
	on a $Z\in \mathcal{T}\restr{Y}$, so just show that $Z\in \mathcal{T}/\widetilde{\mathcal{T}}$,\\
	suppose that $Z \notin \mathcal{T}/\widetilde{\mathcal{T}}$, i.e,\\
	there exist $z\in Z$, and $w\in X\backslash Z$, such that 
	$z \leq_{\mathcal{T}/\widetilde{\mathcal{T}}} w$, i.e,\\
	there exist $ b_1,...,b_q\in X$ such that $z\mathcal{R}b_1...\mathcal{R}b_q\mathcal{R}w$,\\
	 then there exist $j \in \{1,...,q\}$, such that, $b_j\in Z$, $b_{j+1}\in X\backslash Z$, and 
	($b_j\leq_{\mathcal{T}}b_{j+1}$, or $b_{j+1}\leq_{\widetilde{\mathcal{T}}} b_j$).
	We have  $Z\in \mathcal{T}\restr{Y}$, and $Y\in \mathcal{T}$, then $Z\in \mathcal{T}$, then 
	$b_j\leq_{\mathcal{T}}b_{j+1}$ is impossible, moreover $X\backslash Z\in \widetilde{\mathcal{T}}$, then $b_{j+1}\leq_{\widetilde{\mathcal{T}}} b_j$ is impossible.\\
	\\
	Then the map $(Z, \mathcal{T}^{\prime }, \mathcal{T}^{\prime \prime} )\mapsto (Z, \mathcal{T}^{\prime \prime}\mathcal{T}^{\prime }\restr{Z})$ and the map $(Z, \widetilde{\mathcal{T}})\mapsto (Z, \widetilde{\mathcal{T}}\restr{Z}, \widetilde{\mathcal{T}}\restr{X\backslash Z})$ is defined.\\
	Let $ \widetilde{\mathcal{T}}\oprec  \mathcal{T}, \widetilde{\mathcal{T}}\restr{X\backslash Y}=D_{X\backslash Y,\widetilde{\mathcal{T}}}, Y\in \mathcal{T}/ \widetilde{\mathcal{T}}$, and $Z\in \mathcal{T}/ \widetilde{\mathcal{T}}\restr{Y}$, then\\
	$Y\in \widetilde{\mathcal{T}}$, $Z\in \widetilde{\mathcal{T}}$ and $X\backslash Z\in \widetilde{\mathcal{T}}$, then $Y\backslash Z=X\backslash Z \cap Y \in \widetilde{\mathcal{T}}$.\\
	If $\mathcal{T}^{\prime}=\widetilde{\mathcal{T}}\restr{Z}D_{X\backslash Z,\widetilde{\mathcal{T}}}$, then $Z\in \mathcal{T}^{\prime}$.\\
	If $\mathcal{T}^{\prime \prime}=\widetilde{\mathcal{T}}\restr{X\backslash Z}$, we have $Y\backslash Z \in \widetilde{\mathcal{T}}$, then $Y\backslash Z \in \mathcal{T}^{\prime \prime}$.
	Then the map $(Z,\widetilde{\mathcal{T}})\mapsto (Z, \widetilde{\mathcal{T}}\restr{Z}D_{X\backslash Z,\widetilde{\mathcal{T}}},\widetilde{\mathcal{T}}\restr{X\backslash Z})$ is defined.\\
	Moreover it is clear that both applications are inverse each other.\\
	Therefore,
	$$\xi \circ(Id\otimes \Delta )\circ \Phi =m^{1,3}\circ(\Phi \otimes \Phi )\circ \Delta,$$
	which proves the commutativity of the diagram.
	}
\end{proof}

	\begin{remark}\rm
If we apply the functor $\overline{\mathcal{K}}$, we notice here that this diagram yields the commutative diagram:

$$
\xymatrix{
\mathcal{D} \ar[rr]^\Phi \ar[d]_{\Delta} && \widetilde{\mathcal{D}} \otimes \mathcal{D} \ar[d]^{Id \otimes \Delta}\\
	\mathcal{D}\otimes\mathcal{D} \ar[d]_{\Phi \otimes \Phi } && \widetilde{\mathcal{D}} \otimes \mathcal{D} \otimes \mathcal{D} \ar[d]^{\xi}\\
\widetilde{\mathcal{D}} \otimes \mathcal{D} \otimes \widetilde{\mathcal{D}} \otimes \mathcal{D} \ar[rr]_{ m^{1,3} } &&\widetilde{\mathcal{D}} \otimes \mathcal{D} \otimes \mathcal{D}
}
$$
where $\Phi$ is a shorthand for $\overline{\mathcal{K}}(\Phi)$ and so on.
	\end{remark}


\end{document}


